\newtheorem{theorem}{Theorem}[section]
\newtheorem{thm}[theorem]{Theorem}
\newtheorem{lemma}[theorem]{Lemma}
\newtheorem{corollary}[theorem]{Corollary}
\theoremstyle{definition}
\newtheorem{remark}[theorem]{Remark}
 \theoremstyle{plain}
\newtheorem*{namedthm}{\namedthmname}
\newcounter{namedthm}
 \newcommand{\R}{\mathbb R}
 \newcommand{\C}{\mathbb C}
 \newcommand \PSH {{\rm PSH}}
 \newcommand \Amp {{\rm Amp}}
   \newcommand \Jac{ \rm Jac}
  \newcommand \exph{ \rm exph}
 \numberwithin{equation}{section}
\subjclass[2010]{32W20, 32U05, 32Q15}
\keywords{Envelopes, K\"ahler manifolds, big cohomology classes}
\begin{document}
 \title{The regularity of envelopes}
 \author{Eleonora Di Nezza, Stefano Trapani}
  \maketitle
  \begin{center}
      \emph{In memory of Jean-Pierre Demailly.}
  \end{center}

\maketitle

\begin{abstract}
\noindent Let $X$ be a compact complex manifold of complex dimension $n$ and $\alpha$ be a smooth closed real form on $X$ such that its cohomology class $\{ \alpha \}\in H^{1,1}(X, \R)$ is big. In this paper we prove that, given a bounded  function $f$ with bounded distributional laplacian in $X,$  the $\alpha$-psh envelope $P(f)$ is also locally bounded with  locally bounded distributional laplacian on the ample locus of $\{\alpha\}.$ 
\end{abstract}

\renewcommand{\abstractname}{R\'esum\'e}

\begin{abstract}

Soit $X$ une vari\'et\'e complexe compacte de dimension complexe $n$ et $\alpha$ une (1,1)-forme r\'eelle et ferm\'ee sur $X$ telle que sa classe de cohomologie $\{ \alpha \}\in H^{1,1}(X, \R)$ est grosse. Dans ce travail on d\'emontre que, étant donn\'ee une fonction $f$ dont le laplacien, au sens des distributions, est born\'e, alors l'enveloppe $\alpha$-psh, $P(f)$, a aussi un laplacian born\'e sur le lieu ample de $\{\alpha\}$.

\end{abstract}

\section{Introduction}

\emph{Envelopes of plurisubharmonic functions} played an important role in the development of the pluripotential theory on domains of $\mathbb{C}^n$, see for example \cite{BT,BT82,BT86,S77,Z76}. 

When, relying on the Bedford and Taylor theory in the local case, the foundations of a pluripotential theory on compact  K\"ahler manifolds have been developed \cite{GZ05, GZ07}, \emph{envelopes of quasi-plurisubhar\-monic functions} started to be intensively studied.

The geometric motivations we can mention are, among others, the study of geo\-desics in the space of K\"ahler metrics \cite{Chen00, Dar17, Ber17, RWN17, CTW18, DDL1, CMc19} and the transcendental holomorphic Morse inequalities on projective manifolds \cite{WN19}.

The two basic (and related) questions are about the regularity of envelopes and the behaviour of their Monge-Amp\`ere measures. 
To fix the setting, let $X$ be a compact complex manifold of complex dimension $n$, let $\alpha$ be a smooth closed real $(1,1)$-form and let $f$ be a function on $X$ bounded from above. We are going to refer to $f$ as ``barrier function". Then the prototype of an envelope construction is 
$$ P_\alpha(f):= \left(\sup\{ \varphi\in \PSH(X, \alpha)\,: \; \varphi\leq f \}\right)^*,$$ 
where $^*$ denotes the upper semi-continuous regularization and $\PSH(X, \alpha)$ is the space of all $\alpha$-plurisubharminic functions, as defined in Section 2.\\
The function $P_\alpha(f)$ is either a genuine $\alpha$-plurisubharmonic function or identically $-\infty$. 
When $ f= -{\bf 1}_T$ is the negative characteristic function of a subset $T$, then $P_\alpha (f) = f^*_T$ is the so-called relative extremal function of $T$ \cite{GZ05}.
When $f = 0$, then $P_\alpha(0)= V_\alpha$ is a distinguished potential with minimal singularities.\\ 

We recall that a function on some open set in $X$ is said to be in $C^{1,\bar{1}}$
if it is locally $L^{\infty}$ and its distributional Laplacian is represented by a locally bounded function. Any $C^{1,\bar{1}}$ function is also $ C^{1,\alpha}$ for any $0 < \alpha <1.$ In this paper we prove the following:
\begin{thm}
Let $\alpha$ be a real closed $(1,1)$-form such that $\{\alpha\}$ is a big class. Assume $f\in C^{1, \bar{1}} (X)$. Then $P_\alpha(f)$ is $C^{1, \bar{1}}$ on the ample locus of $\{\alpha\}$.
\end{thm}
We actually prove a regularity result for the more general rooftop envelope $P_\alpha (f_1, \cdots, f_k)$ (see Theorem \ref{main}).

The study of such envelopes has led to several works. We start  by summarizing them in the case of a smooth barrier function $f$.\\
The first result to mention is \cite{Ber09}, where the author proves that in the case $\alpha\in c_1(L)$ where $L$ is a big line bundle over $X$, the envelope $P_\alpha(f)$ is  ${C}^{1,1}$ on the ample locus $\Amp(\{\alpha\})$ of $\alpha$, and moreover 

\begin{equation}\label{support}
\alpha_{P_\alpha(f)}^n= {\bf 1}_{\{P_\alpha(f)=f\}} \alpha_f^n.
\end{equation}
Here, given a $\alpha$-psh function $u$, $\alpha_u:=\alpha + i \partial \bar{\partial}u$ and $\alpha_{u}^n$ denotes the non-pluripolar Monge-Amp\`ere measure
$(\alpha + i \partial \bar{\partial}u)^n$ (see \cite[Definition 1.1]{BEGZ10}). Thus the left-hand side of \eqref{support} is the non-pluripolar product of $P_\alpha(f)$ while $\alpha_f^n$ is the wedge product $n$ times of the form $\alpha_f$ that has bounded coefficients since $f$ is smooth, (observe that $f$ is not $\alpha$-plurisubharmonic so the non-pluripolar product of ${\alpha_f}$ does not make sense).

Later people started to work on possible generalisations of the above results in the case of a pseudoeffective class $\{\alpha\}$ that does not necessarily represent the first Chern class of a line bundle. Assuming that $\{\alpha\}$ is big and nef, Berman \cite{Ber19}, using PDE methods, proved that the envelope $P_\alpha(f)$ is $C^{1,\bar{1}}$ on $\Amp(\{\alpha\})$ and that the identity in \eqref{support} holds. \\

The optimal regularity $C^{1,1}$ when $\{\alpha\}$ is a  K\"ahler class was then proved independently by \cite{T18} and \cite{CZ19}, while the big and nef case was settled in \cite{CTW18}.\\
For a general pseudoeffective class the equality 
\begin{equation}\label{support1}
\alpha_{P_\alpha(f)}^n = {\bf 1}_{\{P_\alpha(f)=f\}} \alpha_f^n.
\end{equation}
is established in \cite{DNT}, without relying on the regularity of $P_\alpha(f)$.

The general case of a big class was treated for the first time by Berman and Demailly. In \cite{BD} the authors claim to prove the $C^{1,\bar{1}}$ regularity of $P_\alpha(f)$ on the ample locus of $\{\alpha\}$ when $f$ is smooth. However it became clear later that their arguments had a mistake in their crucial technical Lemma 1.12.
In fact, the lower bound in their Lemma 1.12 does not follow from \cite[eq. (1.8)]{BD} (as they state) since there are some mixed terms to take care of.  

The purpose of this paper is then to correct the proof in \cite{BD} and generalise their regularity result.

 Our proof relies on results in \cite{BD} and especially on \cite{Dem94} but it presents two novelties. We proceed in three steps:
\begin{itemize}
    \item As in \cite{BD}, we consider the regularization of the envelope through the exponential map. We first derive weaker estimates than the one stated in \cite[Lemma 1.12]{BD}, which allows us to obtain H\"older regularity of the envelope. This is done in Lemma \ref{ineq00}.
    \item Such regularity and a version of the Lelong-Jensen inequality, allow us to prove an estimate for the complex hessian of the regularized function (like the one in Lemma 1.12) but with variable coefficients (Lemma \ref{lemma_varcoeff}).  We can however control the asymptotics of these coefficients near the boundary of the the ample locus of $\{\alpha\}$;
    \item In the third step, we pass to a modification, and we transport the regularized function on a suitable line bundle. Thanks to a change of coordinates we are able to ``kill" the coefficients appearing in the lower bound of the hessian, thus reducing to estimates with constant coefficients. This is the key to use (more or less) the same strategy as in \cite{BD} and complete the proof. This is done in Section \ref{section_line}.
\end{itemize}

\medskip\noindent\textbf{Acknowledgements.}  
The second author is partially supported by PRIN \emph{Real and Complex Ma\-ni\-folds: Topology, Geometry and holomorphic dynamics} n.2017JZ2SW5, and by MIUR Excellence Department Projects awarded to the Department of Mathematics, University of Rome Tor Vergata,2018-2022 CUP E83C18000100006, and 2023-2027. The paper was finalized while the second author was visiting C.M.L.S supported by the CNRS grant PEPS 2021 of the first author. 
 We also thank Vincent Guedj, Ahmed Zeriahi and S\'ebastien Boucksom for very useful discussions on the subject of this paper. 

\section{Preliminaries}\label{s1}
Let us first assume that $(X,\omega)$ is a compact K\"ahler manifold of complex dimension $n\geq 1$. The K\"ahler assumption will be then removed at the end of Section \ref{section_line}.\\
Let $\alpha$ be a smooth closed real $(1,1)$ form such that the cohomology class $ \{ \alpha \}$ is big.   

The cohomology class $ \{\alpha\} $ is said to be big if there exists a positive closed $(1,1)$-current $T = \alpha + i \partial \bar{\partial} \Phi \in \{\alpha\}$ such that $T \geq  \varepsilon_0\omega$, for some $\varepsilon_0>0$. By replacing $\omega$ with $\varepsilon_0 \omega$ we can assume 
$T \geq \omega.$
Such currents are called K\"ahler currents.

Let $\Omega $ be the set of $x$ in $X$ such that  there exists a K\"ahler current cohomologous to $\alpha$ which is smooth in a neighborhood of $x.$ This is called the ample locus of $\{ \alpha \}$ (also denoted by $\Amp{\{\alpha\}}$ in the literature). If $\{\alpha\}$ is big, $\Omega$ is a non-empty open subset  such that $\Sigma= X \setminus \Omega$ is a closed analytic set. Moreover, there exists a  K\"ahler current 
$T = \alpha + i \partial \bar{\partial} \Phi$ with analytic singularities which is smooth in $\Omega$  (see \cite{Bou04}). Observe that, without loss of generality, we can normalize $\Phi\leq -1$.

The cohomology class of $\alpha$ is K\"ahler if and only if $\Omega = X.$ Also, $\Phi$ is a smooth function and we can take $\omega=\alpha + i \partial \bar{\partial} \Phi$.

 A function $\varphi: X \rightarrow \mathbb{R}\cup \{-\infty\}$ is called quasi-plurisubharmonic if locally $\varphi= \rho + u$, where $\rho$ is smooth and $u$ is a plurisubharmonic function. We say that $\varphi$ is $\alpha$-plurisubharmonic ($\alpha$-psh for short) if it is quasi-plurisubharmonic and $\alpha+i \partial \bar{\partial}\varphi \geq 0$ in the weak sense of currents on $X$. We let $\PSH(X,\alpha)$ denote the space of all $\alpha$-psh functions on $X$. \\
 Let $f$ be a smooth function on $X,$  we denote by $P(f)$ the $\alpha$-psh envelope of $f,$ given by 
$$ P(f) = \left(\sup\{ \varphi \;\,  \alpha\mbox{-psh} \ :  \ \varphi \leq f \}\right)^*.$$
Note that we drop the index $\alpha$ for simplicity since no ambiguity can occur.\\
I can be showed that $P(f)$ is $\alpha$-psh as well, is bounded by $f$ and hence there is no need to take the upper semicontinuous regularization. Also, clearly $P(f) - f$ is the largest non-positive   $(\alpha + i \partial \bar{\partial} f)$-psh function, so for simplicity we will assume in the sequel $f \equiv 0.$ Observe that by definition $\Phi \leq P(0),$ hence $P(0)$ is locally bounded in $\Omega.$

\smallskip

Let $x_0 \in \Omega$ and let $z$ be holomorphic local coordinates with center in $x_0$ which are $\omega$ normal at $x_0.$ Given a function in $\Omega,$ we will make the harmless confusion between the function in $\Omega$ restricted to a small coordinate open set (and in this case we use the variable $x$), and the same function expressed in the local coordinate variable $z.$ 

Let $\varphi$ be any $\alpha $-psh function, we denote by $\Psi(\varphi)$ the regularised function of $\varphi$ defined in \cite[eq. (1.6)]{BD} as
$$ \Psi(\varphi)	(z,w) = \int_{ \zeta \in T_z X } \varphi({{\exph}_{z} }(w \zeta)) \, \chi (|\zeta|^2) \, dV_\omega, \quad (z,w) \in X\times \mathbb{C} $$
where $\exph : TX \rightarrow X$, $T_zX \ni \zeta \rightarrow  {\exph}_z(\zeta)$ is the formal holomorphic part
of the Taylor expansion of the exponential map 
of the Chern connection on $TX$
associated with the metric $\omega$ and $\chi : \mathbb{R}\rightarrow  \mathbb{R }^+ $ is a smooth cut-off function  such that $\chi > 0 $ for $t < 1,$ $\chi(t) = 0$ for $t \geq 1$ and $\int_{\mathbb{C}^n} \chi(|\zeta|^2) d \lambda(\zeta) = 1.$

We simply denote by $\Psi$ the function $\Psi(P(0)).$ Observe that $\Psi \leq 0.$

 By \cite{Dem94} there exists $K > 0 $ large enough such that  for $w \in \mathbb{C} \setminus \{0 \}$  we have 
\begin{equation} \label{Lelong}
\lambda(z,w) := \frac{\partial ( \Psi(z,e^s) + Ke^{2s} ) }{\partial s}_{\Big|s = \log|w|} \rightarrow \nu_z(P(0))   
\end{equation} 
as $w$ goes to $0$ where $\nu_z(P(0))$ is the Lelong number of $P(0)$ at the point $z.$ Moreover,  the function $\lambda$ is continuous, non-negative and increasing with respect to $|w|$ (see \cite[page 15]{Dem94}.
In particular the function $$ \Psi(z,e^s) + Ke^{2s} $$ is increasing with respect to $s.$

\section{Lower bound for the hessian with variable coefficients}
Let $\theta = \sum_j \theta_j \frac{\partial}{\partial z_j}$ and $\tau = \tau_1  \frac{\partial}{\partial w}$ be tangent vectors in the coordinates $z$ and $w$ and let $S$ be a real $(1,1)$-form in the coordinates $z,$ $w$ and $(z,w)$ (i.e. $S$ can be locally written as $\sum f_{jk} dz_j\wedge d\bar{z}_k + g dw\wedge d\bar{w} + \sum( h_i dz_i\wedge d\bar{w}+\tilde{h}_i dw \wedge d\bar{z}_i)  $).

As in \cite{Dem94} we denote by $S[\theta]^2, \  S[\tau]^2, \  S[\theta,\tau]^2$  the action of the form $S$ on the vectors $\theta$, $\tau,$ and   $\theta + \tau$ respectively    as an hermitian form.
For notational convenience we denote by  $|dz|^2, |dw|^2$  the  forms 
$$ |dz|^2 :=  (i/2) \sum_j d z_j \wedge d \bar{z_j},   \  |dw|^2 = (i/2) dw \wedge d \bar{w} $$ 
Observe that if the holomorphic coordinates are normal at $0,$ then
$|dz|^2_0 = \omega|_0.$
Note also  that $|dz|^2[\theta]^2 = \sum_j |\theta_j|^2 := | \theta|^2,$ and $|dw|^2[\tau]^2 =  |\tau_1|^2 :=  |\tau|^2.$ 
Moreover if $p,q$ are smooth  non-negative functions in $z,w$ coordinates, then:  
\begin{equation}\label{square} 
\left(\frac{ p^2 |dz|^2}{2} + \frac{q^2 |dw|^2}{2 }\right)[\theta,\tau]^2  -pq |\theta||\tau| = \frac{1}{2}( p |\theta| - q|\tau|)^2 \geq 0.
\end{equation}  

\medskip

Given $\beta \in [0,1]$ we denote by $I_{\beta}$ the following inequality:
\begin{flalign*}
&\left( \alpha(z)+ i \partial \bar{\partial}_{(z,w)} \Psi(z,w) \right)[\theta,\tau]^2 \\
&\geq 
 -\left(A\lambda(z,|w|) +K|w|^{1 + \beta} \right) |\theta|^2 - K|w|^\beta |\theta||\tau|  -K|w|^{\beta}  \ |\tau|^2
\\
&= \left( -\left(A\lambda(z,|w|) +K|w|^{1 + \beta} \right) |dz|^2   -K|w|^{\beta}  \ |dw|^2 \right)[\theta,\tau]^2  - K|w|^\beta |\theta||\tau|  
\end{flalign*} 

where  $ A > 0, K > 2$ and $0 < |w| < \delta_0.$

\smallskip

\begin{lemma}\label{ineq00}
The inequality $I_0$ holds for some positive constants $A,K,\delta_0.$  \label{firstineq} \end{lemma}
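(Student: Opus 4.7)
The plan is to prove $I_0$ by differentiating the definition
$$\Psi(z,w) = \int_{\zeta\in T_zX} P(0)({\exph}_z(w\zeta))\,\chi(|\zeta|^2)\,dV_\omega$$
under the integral sign, and estimating separately the three blocks of the hermitian form $\alpha(z) + i\ddb_{(z,w)}\Psi$: the pure $z$-block (acting on $\theta$), the pure $w$-block (acting on $\tau$), and the mixed $z\bar w$-block (contributing to $|\theta||\tau|$). The novelty compared with \cite[Lem.~1.12]{BD} is to keep the mixed off-diagonal coefficient explicit as a $-K|\theta||\tau|$ term, rather than trying to absorb it into the diagonal; this absorption step is precisely what caused the gap in loc.\ cit.

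For the \emph{pure $z$-block}, I would differentiate twice in $z$ under the integral. Since $\exph$ is the holomorphic part of the Chern exponential, the $z$-derivatives of ${\exph}_z(w\zeta)$ equal the identity plus Christoffel corrections of order $|w|$ and curvature corrections of order $|w|^2$. Exploiting $\alpha + i\ddb P(0) \geq 0$ on the pulled-back form and comparing $\alpha({\exph}_z(w\zeta))$ with $\alpha(z)$ produces a smooth $O(|w|)$ remainder contributing the $-K|w|\,|\theta|^2$ term. The order-$|w|^2$ curvature correction is paired with the positive current $i\ddb P(0)$, and its average over a ball of radius $|w|$ in $T_zX$ is controlled, via Demailly's monotonicity identity \eqref{Lelong}, by $A\lambda(z,|w|) + K|w|^2$. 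Altogether
$$\bigl(\alpha(z) + i\ddb_z\Psi(z,w)\bigr)[\theta]^2 \geq -\bigl(A\lambda(z,|w|) + K|w|\bigr)|\theta|^2.$$

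For the \emph{pure $w$-block}, rotation invariance of $\chi(|\zeta|^2)\,dV_\omega$ in $\zeta$ forces $\Psi(z,w) = \Psi(z,|w|)$, so $\Psi$ depends on $w$ only through $s = \log|w|$. A direct chain-rule calculation gives
$$\Psi_{w\bar w}(z,w) = \frac{1}{4|w|^2}\partial_s^2\Psi(z,e^s) = \frac{\partial_s\lambda(z,|w|)}{4|w|^2} - K,$$
where the last equality uses $\partial_s\Psi(z,e^s) = \lambda - 2K|w|^2$ from \eqref{Lelong}. Monotonicity of $\lambda$ in $|w|$ gives $\partial_s\lambda \geq 0$, hence $\Psi_{w\bar w} \geq -K$ and the pure-$\tau$ contribution is bounded below by $-K|\tau|^2$ after enlarging $K$. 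For the \emph{mixed block}, no smallness in $|w|$ is available — the first $w$-derivative $dP(0)_z\cdot\zeta + O(w)$ is $O(1)$ after averaging — but the off-diagonal entries of the hermitian matrix of $i\ddb_{(z,w)}\Psi$ are locally bounded near $\{w=0\}$, and Cauchy--Schwarz on them yields a contribution $\geq -K|\theta||\tau|$ for $K$ large enough. Combining the three blocks gives $I_0$ after shrinking $\delta_0$ and enlarging $A$ and $K$.

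The main obstacle I expect is the pure $z$-block: identifying the curvature contribution with $A\lambda(z,|w|)$ requires an integration by parts on the tangent fibers of $TX$ (cf.\ \cite{Dem94}), transferring the second $z$-derivatives onto $\chi(|\zeta|^2)$ so they can be paired with the positive current $i\ddb P(0)$, and then invoking monotonicity of $\lambda$ to bound the resulting spherical average. The smooth remainder $-K|w|\,|\theta|^2$ and the mixed term $-K|\theta||\tau|$ unavoidably appear along the way, which is exactly why only the slightly weaker inequality $I_0$ — not the cleaner statement in \cite[Lem.~1.12]{BD} — is what one can actually establish at this stage; the bootstrap to a stronger variable-coefficient estimate is then the content of the next lemma, using H\"older regularity of $P(0)$ and a Lelong--Jensen argument.
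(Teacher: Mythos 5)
Your block-by-block sketch takes essentially the same route as the paper, which proves $I_0$ by directly applying \cite[Proposition 3.8]{Dem94} with $N=2$ together with the formula on page~13 of \cite{Dem94} (in the proof of Theorem~4.1 there); your expansion of those inputs is accurate, and the rotation-invariance observation giving $\Psi_{w\bar w}(z,w) = (4|w|^2)^{-1}\partial_s^2\Psi(z,e^s)$ is a clean way to obtain the pure $w$-block lower bound from monotonicity of $\lambda$. The one place where your sketch is more assertion than derivation is the mixed $z\bar w$-block: the $O(1)$ bound on the off-diagonal entries of $i\partial\bar\partial_{(z,w)}\Psi$ as $w\to 0$ is exactly what \cite[Proposition 3.8]{Dem94} supplies after integration by parts in $\zeta$ (it is not immediate from naive differentiation under the integral, since the second $z$-derivatives of $P(0)$ that appear are a priori only measures), so you are ultimately leaning on the same Demailly input that the paper simply cites.
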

We thank the referee for pointing out that the above lemma was already proved in \cite[Lemma 4.1]{KN19}. However we prefer to give a proof since we make considerations which will be crucial for us in a more general case in the sequel.

\begin{proof}
We choose as above canonical local holomorphic coordinates $(z,\zeta)$ on the tangent bundle of $X$ centered at some point $(x_0,0)$ (which is then identified with $\C^n$), where $z$ varies in an open set $U,$ and the coordinates  are $\omega$ normal at $x_0.$ Let $z\in U$ and $B(z,r) := \{ z' :   |z'-z| < r \}.$
Let $\eta$ be a local potential of $\alpha,$ then
$P(0) + \eta$ is psh. 
\noindent Observe that, since $\Psi$ is a linear operator and $\eta $ is a smooth function
\begin{equation}\label{eq00}
\Psi(P(0) + \eta) = \Psi(P(0)) + \Psi(\eta) = \Psi(P(0)) + \eta + O(|w|^2).
\end{equation}

We let $A=\sup_{|\zeta|\leq 1,|\xi|\leq 1} \{|c_{jklm}\zeta_j \bar{\zeta}_k \xi_l \bar{\xi}_m\}|$, where $c_{jklm}$ are the coefficients of the curvature tensor of $\omega$. 

The above estimate comes from the proof of \cite[Theorem 4.1]{Dem94} where we choose $u = A \omega$ and $\gamma= 0.$ Indeed, it follows from \cite[Proposition 3.8]{Dem94} (applied with $N=2$), the estimates (3.11) and (4.3) in \cite{Dem94} that for $K' >>1$ large enough we have 
the following estimate up to a term $O(|w|^2)$:
$$
\alpha +i \partial \bar{\partial}_{(z,w)} \Psi(z,w)[\theta,\tau]^2 \geq  
 $$
$$ |w|^2\int_{\C^n} -\chi_1(\zeta) \sum_{j,k,l,m} \frac{\partial^2 (P(0)+\eta)}{\partial z_m \partial \bar{z_l}} ({\exph}_{z}(w\zeta)) \left(c_{jklm} +\frac{1}{|w|^2} \delta_{jm}. \delta_{kl}\right) \tau'_j \bar{\tau'_k}\; d\lambda(\zeta) - K'(|\theta||\tau|+|\tau|^2)  $$
where $\chi_1$ denotes the primitive $\chi_1(t) =\int_{+\infty}^t \chi(u) \, du $ of $\chi$ such that $\chi_1(t) = 0$ for $t\geq  1$ and ${\tau'}=\theta+\zeta \tau +O(|w|)$. \\
Since $0\leq -\chi_1$, we get (always up to a term of the form $O(|w|)$)
\begin{eqnarray*} 
 &&\alpha+i\partial \bar{\partial}_{(z,w)} \Psi(z,w)[\theta,\tau]^2 \\
&&\geq  -A \left( \int_{\C^n} -|w|^2 \chi_1(\zeta) \sum_{j,k,l,m} \frac{\partial^2 (P(0)+\eta)}{\partial z_m \partial \bar{z_l}} ({\exph}_{z}(w\zeta)\; d\lambda(\zeta) \right) |\theta|^2 - K''(|\theta||\tau|+|\tau|^2)\\
&&\geq -A\lambda_\Omega(z,w)  |\theta|^2 - K''(|\theta||\tau|+|\tau|^2)
\end{eqnarray*}
 where the function $\lambda_{\Omega}$ is defined in \cite[page 14]{Dem94}. We stress that the coefficients $K', K''$ are $O(1)$ thanks to \cite[eq. (3.11)]{Dem94}.
\smallskip

Now the various ``$O$-terms"  depend on the Taylor expansion of ${\exph}_{z}$ up to order $N = 2,$ in the given local normal holomorphic coordinates In particular they depend on the coefficients $b_{jklm}$ and $d_{\alpha km}$ appearing in \cite[Proposition 2.9]{Dem94}. On the other hand, thanks to \cite[Main Theorem]{JG} there exist holomorphic normal coordinates centered at a point $x_0$  such that, in these coordinates, 
the coefficients of the Taylor expansion of the  exponential map at $0$ depend  only  on the  curvature of $\omega$ and its subsequents covariant derivatives at $x_0.$ Since $\omega$  is defined on the compact manifold $X$ it follows that these coefficients are uniformly  bounded.

Also, thanks to \cite[eq. (4.5)]{Dem94} we know that
\begin{equation}\label{lelong_a}
| \lambda - \lambda_{\Omega} |  = O(|w|^2).
\end{equation}
Thus

$$
\left( \alpha(z)+ i \partial \bar{\partial}_{(z,w)} \Psi(z,w)\right)[\theta,\tau]^2 \geq  
 $$
$$  \left( -\left(A\lambda(z,|w|) + \tilde{K}|w|^2 \right) |dz|^2   - \tilde{K} |dw|^2 - C|w|\,( |dz|^2 + |dw|^2) \right)[\theta,\tau]^2  - \tilde{K}|\theta||\tau|  $$
 where $ |w| < \delta_0 \leq 1.$ \\ Let us stress that $-C|w| \,( |dz|^2 + |dw|^2)[\theta, \tau]^2$ is needed to bound the term $O(|w|)[\theta, \tau]^2$ appearing in \cite[Proposition 3.8]{Dem94} applied with $N=2$. 
Since $|w| \leq 1,$  if we put $K = \tilde{K} + C$ we find $I_0.$ \end{proof}

\begin{remark}\label{i5}
In \cite{DHGKZ} the authors study the H\"{o}lder regularity of solutions of Monge-Amp\`ere equation, in particular in their Theorem D they work in the case of a big class $\{\alpha\}$. 

In their proof they make use of the incorrected \cite[Lemma 1.12]{BD}. However we point out that the inequality $I_0$ is enough for their purposes. The interested reader can find all the details in \cite[Theorem 4.2]{KN19} where the authors considered the case when the reference form is an hermitian positive definite form. The same arguments apply when $\alpha$ represents a big class.
\end{remark}

\smallskip

\noindent For simplicity we make the change of coordinate 
$(z,w) \to (z,\delta_0^{-1} w),$ so we will work with $|w|<1$. Let $\gamma \in [1/2,1],$ we say that the statement $II_{\gamma}$  holds if there exist  strictly positive continuous functions  $C_1(x), \delta(x)$ in  $\Omega$ and a positive constant $C_2$  such that 
$$-C_2 \leq   \left( \frac{ \Psi(x,\delta) - P(0)(x)}{\delta^{ 2 \gamma}} \right)  \leq  C_1(x)$$ for $0 < \delta < \delta(x). $ 

In the above inequality, we used the variable $x$ and not the local coordinate $z$ in order to stress that the inequality is global.

\smallskip

The main strategy to prove regularity of the envelope both in \cite{BD} and here, is to prove the inequality $II_1$  with $C_1(x) < C_1$, $C_1 > 0 $ and $\delta(x) > \delta_0 > 0 $ where $x$ varies on  some relatively compact open set of the ample locus of the class $\{ \alpha\}.$ It is proved in \cite[page 44]{BD} that this uniform inequality is indeed sufficient to obtain $C^{1,\bar{1}}$ regularity of $P(0)$ on the ample locus of $\{ \alpha \}$. However, given the central importance of the latter  statement, we will give  details of the proof in the lemma below:

\begin{lemma}\label{ineqreg}
Assume $$-C_2 \leq   \left( \frac{ \Psi(x,\delta) - P(0)(x)}{\delta^{ 2}} \right)  \leq  C_1 $$
for all $x \in U,$ where $U$ is a relatively compact open set of $\Omega,$ and with $C_1>0, C_2>0$, $0 < \delta < \delta_0 > 0$ which may depend on $U$. 
Then $P(0)$ is $C^{1,\bar{1}}(\Omega).$  
\end{lemma}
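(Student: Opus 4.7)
The plan is to deduce from the hypothesis that the complex Laplacian of $P(0)$ is locally bounded on $U$; by the definition of $C^{1,\bar{1}}$ recalled in the introduction this gives $P(0) \in C^{1,\bar{1}}(U)$, and covering $\Omega$ by relatively compact open sets on which the hypothesis holds then completes the proof. I would fix $x_0 \in U$ and work in $\omega$-normal holomorphic coordinates on a small chart $V \ni x_0$, writing $\alpha = i\partial\bar\partial \rho$ for a smooth local potential $\rho$. Then $v := P(0) - \rho$ is psh and bounded on $V$, and it suffices to show $\Delta v \in L^\infty_{\mathrm{loc}}(V)$.

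The first step uses the Lelong--Jensen identity \eqref{Lelong}. Since $P(0)$ is locally bounded on $\Omega$, its Lelong number vanishes at every $z \in U$, so $\lambda(z, e^s) \to 0$ as $s \to -\infty$. Integrating $\partial_s[\Psi(z, e^s) + K e^{2s}] = \lambda(z, e^s)$ from $-\infty$ to $\log\delta$ yields
$$\Psi(z, \delta) + K\delta^2 - P(0)(z) = \int_{-\infty}^{\log\delta} \lambda(z, e^s)\, ds,$$
and combining with the hypothesis gives $\int_{-\infty}^{\log\delta} \lambda(z, e^s)\, ds \leq (C_1 + K)\delta^2$ for $z \in U$ and $0 < \delta < \delta_0$. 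Restricting the integral to the unit-length interval $[\log(\delta/e), \log\delta]$ and using that $\lambda(z, \cdot)$ is non-negative and increasing in $|w|$ yields the pointwise bound
$$\lambda(z, r) \leq C\, r^2, \qquad z \in U, \ 0 < r < r_0.$$

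The second step converts this into a bound on the trace measure of $i\partial\bar\partial v$ on small balls. By the proof of Theorem~4.1 in \cite{Dem94}, $\lambda(z, r)$ coincides, up to an $O(r^2)$ smooth correction coming from $\rho$ and from the higher-order deviation of $\exph_z$ from the linear map in normal coordinates, with a positive multiple of the normalized trace $r^{2-2n}\int_{B_r(z)} \omega^{n-1}\wedge i\partial\bar\partial v$. Hence
$$\int_{B_r(z)} \omega^{n-1}\wedge i\partial\bar\partial v \leq C'\, r^{2n}$$
uniformly on a slightly smaller open set $U' \subset U$, and Lebesgue differentiation then gives that this positive measure has locally bounded density. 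Since $\omega$ is uniformly elliptic in the chosen coordinates, this is equivalent to $\Delta v \in L^\infty_{\mathrm{loc}}(V)$, as required.

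The main obstacle is this second step: making rigorous the relation between Demailly's exphmap-averaged function $\lambda$ and the genuine trace measure of $i\partial\bar\partial v$. One must control uniformly on $U$ the smoothing kernel $\chi$ (which is not a characteristic function), the deviation of $\exph_z$ from the identity (which is only third-order in $\zeta$ after fixing $\omega$-normal coordinates at the center but picks up first-order-in-$z$ corrections nearby), and the $O(r^2)$ contribution to $\Psi(z, r) - P(0)(z)$ coming from the smooth potential $\rho$ (which should be absorbed into the constant $C_1$).
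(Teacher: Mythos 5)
Your route ends at the same place as the paper --- a uniform bound $\mu_0(B(x,r)) \leq C\,r^{2n}$ on the local trace measure $\mu_0 = \Delta(P(0)+\eta)$ (note the sign: if $\alpha = i\partial\bar\partial\rho$ then the psh function is $P(0)+\rho$, not $P(0)-\rho$), followed by a promotion of this bound to a locally bounded density --- but it takes a detour through $\lambda$. The paper does not argue via $\lambda$ at all in this lemma; it invokes \cite[eq.\ (1.16)]{BD}, a Lelong--Jensen-type estimate that directly bounds $\frac{1}{(at)^{2n-2}}\int_{B(z,at)} \Delta(P(0)+\eta)\,d\lambda$ from above in terms of $\Psi(z,t)-P(0)(z)$ plus an $O(t^{2})$ term, so the hypothesis immediately gives $\underline{D}(\mu_0)$ bounded. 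The step you flag as ``the main obstacle'' --- converting $\lambda(z,r)\leq C r^{2}$ into a uniform bound on $\mu_0(B(z,r))/r^{2n}$ while controlling the weight $\chi$, the nonlinear part of $\exph_z$, and the contribution of the local potential, all uniformly in $z\in U$ --- is precisely the content of the inequality the paper cites (proved via Demailly's machinery in \cite{Dem94,BD}). You have correctly located the crux, but since you do not carry it out, this remains a genuine gap in your proposal.

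There is a second, smaller gap at the end. ``Lebesgue differentiation then gives that this positive measure has locally bounded density'' compresses a real argument: Lebesgue differentiation shows the absolutely continuous part $\mu_1$ in the decomposition $\mu_0=\mu_1+\mu_2$ has density $\underline{D}(\mu_0)\leq C$ a.e., but it does not by itself kill the singular part $\mu_2$. The paper does this carefully via \cite[Theorems 7.8, 7.13, 7.15]{Rudin}: $\underline{D}(\mu_2)=+\infty$ on a set $S_2$ of full $\mu_2$-measure, $S_2\subseteq\{\underline{D}(\mu_0)=+\infty\}$, and the uniform bound forces the latter set (hence $S_2$) to be empty, so $\mu_2=0$. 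Filling in both of these points would turn your sketch into a correct --- if somewhat longer --- alternative to the paper's proof.
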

\begin{proof}
Let $\mu_0$ be a finite and positive Borel measure on a relatively compact  open set $U$ in $\mathbb{R}^{N}.$ Let $\mu_0 = \mu_1 + \mu_2$ be the Lebesgue decomposition of $\mu_0$ with respect to the Lebesgue measure $m.$  Then $\mu_1$ and $\mu_2$ are finite and positive. Also $\mu_1$ is absolutely continuous with respect to $m$ whereas $\mu_2$ is the singular part (with respect to $m$). We define $$\underline{D}(\mu_0)(x) := \liminf_{r \to 0}\frac{ \mu_0(B(x,r))}{m(B(x,r)}$$ where $B(x,r)$ is the open ball of center $x$ and radius $r.$ Similarly we define  $$\underline{D}(\mu_1)(x) := \liminf_{r \to 0}\frac{ \mu_1(B(x,r))}{m(B(x,r)}.$$ These are globally defined functions on $U,$ and they may have non-negative values including $+ \infty.$  \\
 \cite[Theorem 7.13]{Rudin} ensures that  $\underline{D}(\mu_0)(x) = \underline{D}(\mu_1)(x) $ outside of a set of $m$-measure zero (i.e. a set of measure zero w.r.t $m$). Moreover outside of a set of $m$-measure zero $\underline{D}(\mu_0)(x) $ and  $\underline{D}(\mu_1)(x) $ are in fact limits (and not only $\liminf$). For such $x$ we (respectively) denote by $D\mu_0(x)$ and $D\mu_1(x)$ these limits. Moreover it follows from \cite[Theorem 7.8]{Rudin} that
$$ \mu_1(E) = \int_E {D}(\mu_1) dm, $$
for any  $E$ Borel subset of $U$. 
On the other hand
$D(\mu_1) = \underline{D}(\mu_1)= \underline{D}(\mu_0)$  almost everywhere with respect to $m,$ hence
$$\mu_1(E) = \int_E \underline{D}(\mu_0) dm.$$
We claim that if the function $\underline{D}(\mu_0)$ is bounded, then $\mu_0 = \mu_1,$ i.e. $\mu_0$ is absolutely continuous with respect to $m$ with bounded density. In fact 
in \cite[Theorem 7.15]{Rudin} it is proved that the set $ S_2=\{x \in U :  \underline{D}(\mu_2)(x) = + \infty\}$ as full measure with respect to the singular measure $\mu_2,$ meaning that $\mu_2(U)=\mu_2(S_2)$. 
Since
$$ S_2\subseteq S_0= \{ x \in U :  \underline{D}(\mu_0)(x) = + \infty \}  = \{ x \in U : {D}(\mu_0)(x) = + \infty \} $$ it follows that $\mu_2(U)=\mu_2(S_0)$. 
Note also that since $\mu_1$ is  finite and positive, and  $\underline{D}(\mu_0) $ is a density of $\mu_1$ (w.r.t $m$) then the Lebesgue measure of $S_0$ is zero.\\
Since by assumption the function  $ \underline{D}(\mu_0) $ is bounded, the set $S_0$ is empty. Thus $\mu_2(U) = \mu_2(S_0) = \mu_2(\emptyset) = 0.$ Then $ \mu_0 = \mu_1$ is absolutely continuous and it has a bounded $m$-density. This proves the claim.

Now we choose a small open set $U$ in $\Omega$ such that $\alpha = i \partial \bar{\partial} \eta$ in $U.$ The function $\varphi := P(0) + \eta$ is then psh on $U$ and so $\mu_0 := \Delta \varphi$ is a finite and positive measure on $U$. 
If follows from the assumption
$$-C_2\leq \left( \frac{ \Psi(x,\delta) - P(0)(x)}{\delta^{ 2}} \right)\leq C_1$$ and from \cite[eq. 1.16]{BD} that $\underline{D}(\mu_0)$ is bounded on $U.$
It then follows from the arguments above (with $N=2n$) that the measure $\Delta \varphi$ is absolutely continuous with respect to the Lebesgue measure and it has bounded density. This implies that $\varphi,$ hence $P(0)$, is $C^{1,\bar{1}}$ on $U$. Since $U$ is any relatively compact set in $\Omega$ we can then conclude that $P(0)\in C^{1,\bar{1}}(\Omega)$.
\end{proof}

We also have the following: 

\begin{lemma} \label{beta}

Fix $A,K, \beta$ real numbers with $A > 0,$ $  K> 2$, $  \beta \in [0,1].$ Assume the inequality $I_{\beta}$. Then the statement $II_{\gamma}$    holds as well  with $\gamma = \frac{\beta + 1}{2}.$  Moreover if $\{ \alpha \}$ is a K\"ahler class, we can  choose $C_1(x)$ and $\delta(x)$ to be positive constants.

 \end{lemma}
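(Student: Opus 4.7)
The statement splits into a uniform lower bound (with universal $C_2$) and an upper bound (with $x$-dependent $C_1(x)$). The lower bound follows solely from monotonicity: the preliminaries show that $s \mapsto \Psi(z, e^s) + Ke^{2s}$ is non-decreasing with limit $P(0)(z)$ at $-\infty$, so
$$\Psi(z, w) - P(0)(z) \geq -K|w|^2 \geq -K|w|^{2\gamma},$$
using $|w| \leq 1$ and $\gamma \leq 1$. Thus $C_2 := K$ suffices.

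For the upper bound my plan is to construct an $\alpha$-psh competitor to $\Psi(\cdot, w)$ and invoke the extremality of $P(0)$. Setting $\tau = 0$ in $I_\beta$ gives
$$\alpha(z) + i\partial\bar{\partial}_z \Psi(z, w) \geq -\bigl(A\lambda(z, |w|) + K|w|^{1+\beta}\bigr)\omega.$$
Since $\omega \leq T = \alpha + i\partial\bar{\partial}\Phi$, for any constant $\epsilon$ dominating $A\lambda(z, |w|) + K|w|^{1+\beta}$ on the relevant $z$'s,
$$(1+\epsilon)\alpha + i\partial\bar{\partial}_z(\Psi + \epsilon \Phi) = (\alpha + i\partial\bar{\partial}_z \Psi) + \epsilon T \geq -\epsilon \omega + \epsilon T = \epsilon(T - \omega) \geq 0,$$
so $\psi_\epsilon := (\Psi(\cdot, w) + \epsilon \Phi)/(1 + \epsilon)$ is $\alpha$-psh and $\leq 0$ (since $\Psi, \Phi \leq 0$). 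By definition of the envelope, $\psi_\epsilon \leq P(0)$, which rearranges to the key inequality
$$\Psi(x, w) - P(0)(x) \leq \epsilon\,\bigl(P(0)(x) - \Phi(x)\bigr). \qquad (\star)$$
The factor $P(0)(x) - \Phi(x)$ is finite and continuous on $\Omega$, so $(\star)$ gives what we want provided $\epsilon$ can be taken of order $|w|^{1+\beta} = |w|^{2\gamma}$.

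The term $K|w|^{1+\beta}$ is already of the correct order, so the crux is to bound $A\lambda(z, |w|)$ by $C(x)|w|^{1+\beta}$ for $z$ in a suitable set; this is the main obstacle. A first sub-difficulty is that the $\alpha$-psh construction requires $\epsilon$ to dominate $\lambda(z, |w|)$ globally in $z$, while $\sup_X \lambda(z, r)$ need not tend to $0$ because $\nu_z(P(0))$ may be strictly positive on $\Sigma = X \setminus \Omega$. I plan to circumvent this by localizing to $V_N := \{\Phi > -N\} \Subset \Omega$ for large $N$: on such a $V_N$, $\sup_{V_N} \lambda(\cdot, r) \to 0$ by Dini's theorem (using continuity of $\lambda(\cdot, r)$ together with $\lambda(z, r) \to \nu_z(P(0)) = 0$ as $r \to 0$ on $\Omega$), and the divergence $\Phi \to -\infty$ on $\Sigma$ lets me glue $\psi_\epsilon$ to $P(0)$ across $\partial V_N$ to globalize the comparison. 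A second sub-difficulty is to upgrade $\sup_{V_N} \lambda(\cdot, r) \to 0$ into the quantitative rate $O(r^{1+\beta})$. For this I would couple $(\star)$ with the Lelong--Jensen identity
$$\Psi(z, r) + Kr^2 - P(0)(z) = \int_0^r \frac{\lambda(z, t)}{t}\, dt$$
(itself a consequence of the monotonicity $\partial_s[\Psi(z, e^s) + Ke^{2s}] = \lambda(z, e^s)$) and the convexity estimate $\lambda(z, r) \leq (\log 2)^{-1}[\Psi(z, 2r) + 4Kr^2 - P(0)(z)]$, producing a functional inequality $F(r) \leq c_1 F(2r) + c_2 r^{1+\beta}$ for $F(r) := \sup_{V_N}(\Psi - P(0))(\cdot, r)$. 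A careful bootstrap (with $N$ and the neighborhood size tuned appropriately) then forces $F(r) = O(r^{1+\beta})$, hence $\epsilon = O(|w|^{1+\beta})$, yielding the desired bound with $C_1(x)$ depending continuously on $x$ through $P(0)(x) - \Phi(x)$ and the localized supremum of $\lambda$.

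In the Kähler case, $\Omega = X$ is compact and $\Phi$ is smooth, so $\sup_X \lambda(\cdot, r) \to 0$ uniformly by Dini applied globally, the gluing across $\partial V_N$ is unnecessary, and all bootstrap constants are uniform in $x$; hence $C_1$ and $\delta$ can be chosen as positive constants.
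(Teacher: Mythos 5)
Your lower bound is sound and matches the paper's (both come from the monotonicity of $s \mapsto \Psi(z,e^s) + Ke^{2s}$), but the upper bound has a genuine gap that in my view is not fixable along the route you chose.

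The crux of the upper bound is, as you note, controlling $A\lambda(z,|w|)$ by a term of order $|w|^{2\gamma}$. By setting $\tau = 0$ and fixing $w$, you have thrown away exactly the mechanism that the paper uses to get this control. The paper introduces $f_\delta(x,w) = \Psi(x,w) + K'|w|^{2\gamma} - K'\delta^{2\gamma} + c\log\delta$ and the Legendre-type infimum $\psi_{c,\delta}(x) = \inf_{0<|w|<\delta}(f_\delta - c\log|w|)$, then applies Kiselman's minimum principle. The point of minimizing in $w$ is that the minimizing radius $t_0 = e^{s_0}$ automatically satisfies $\lambda(z,t_0) \leq c$ --- either it is a critical point where $\partial_s f_\delta = c$, or it is the right endpoint $\delta$ where the derivative is still $\leq 0$. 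With $c = \delta^{2\gamma}$ this produces the required $O(\delta^{2\gamma})$ bound on $\lambda$ with no further argument. Your proposal has no such selection mechanism, so you are forced to bound $\sup_{V_N}\lambda(\cdot,r)$ a priori, which is precisely the quantity you are trying to estimate.

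The bootstrap you propose to fill this in does not close. Combining your $(\star)$ with your convexity estimate gives, writing $D_N := \sup_{V_N}(P(0) - \Phi)$,
\begin{equation*}
F(r) \;\leq\; A(\log 2)^{-1} D_N\, F(2r) \;+\; \bigl(4A(\log 2)^{-1}K D_N + K D_N\bigr) r^{1+\beta},
\end{equation*}
i.e.\ $c_1 = A(\log 2)^{-1} D_N$. For iteration of $F(r) \leq c_1 F(2r) + c_2 r^{1+\beta}$ to yield $F(r) = O(r^{1+\beta})$ one needs $c_1 < 2^{-(1+\beta)}$, since $F(r) \lesssim c_1^k F(2^k r) + c_2 r^{1+\beta}\sum_j (c_1 2^{1+\beta})^j$ and the geometric factor must contract; but $A$ is a fixed constant from Lemma \ref{firstineq} and $D_N \geq 1$ (indeed $D_N \to \infty$ as $N\to\infty$), so there is no reason to expect $c_1$ small. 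Replacing the factor $2$ by a larger $\theta$ improves $c_1$ only to $A(\log\theta)^{-1} D_N$ while worsening the required threshold to $\theta^{-(1+\beta)}$, and $(\log\theta)^{-1}\theta^{1+\beta} \to \infty$, so the gap widens. In addition, the globalization of $\psi_\epsilon$ from $V_N$ to $X$ that you sketch via gluing with $P(0)$ is circular: to glue across $\partial V_N$ you would need $\psi_\epsilon \leq P(0)$ near $\partial V_N$, but that is essentially the inequality you are trying to establish. The paper sidesteps this because $\psi_{c,\delta}$ is already a globally defined non-positive function whose curvature bound holds on all of $\Omega$ (not just $V_N$), and $\Sigma = X \setminus \Omega$ is pluripolar, so the bound extends automatically.

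To repair the proof you would need to reinstate the Kiselman selection in $w$; once you do, the structure of your argument (form a competitor by convex combination with $\Phi$, use extremality of $P(0)$, unwind) becomes essentially the paper's own proof, with $R(x) = B|\Phi(x)|$, $\delta(x) = e^{-R(x)}$ and $C_1(x) = R(x)e^{(\beta+1)R(x)} - K'$.
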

 
\begin{proof}
Note that if $\beta \in [0,1]$ then $\gamma \in [1/2,1].$
Given $ K' ,c$ positive real numbers, we consider the functions 
$$ f_{\delta}(x,w) =   \Psi(x,w)   +K' |w|^{2\gamma} - K' {\delta}^{2\gamma}  + c \log \delta $$ 
and  
$$ \psi_{c,\delta}(x)  = \inf_{ 0 < |w| < \delta } (f_{\delta} - c \log(|w|).$$
for $0 < \delta < 1, x \in X $ and $|w| < \delta.$ 
Fix $x_0 $ in $\Omega$ and let $z$ be the holomorphic local coordinates centered in $x_0$ which are $\omega$ normal and defined in an open neighborhood of $\bar{U}$ where $U$ is an open neighborhood of $x_0.$
Simple computations give 
$$ \alpha +  i \partial \bar{\partial}_{(z,w)}( f_{\delta} - c \log|w|) = \alpha + i \partial \bar{\partial}_{(z,w)} f_{\delta}  =  \alpha + i \partial \bar{\partial}_{(z,w)} \Psi + 2K' \gamma^2 |w|^{2(\gamma-1)}|dw|^2. $$
We then use the inequality (\ref{square}) with $p = K|w|^{\frac{\beta +1}{2}}, q = |w|^{\frac{\beta -1}{2}},$  and thanks to $I_{\beta},$ we find:

 \begin{flalign*}
& \alpha + i \partial \bar{\partial}_{(z,w)} f_{\delta} \\
 &\geq  -\left(A\lambda(z,|w|) + ( K + K^2/2) |w|^{ \beta + 1 } \right) |dz|^2  +  \left( 2K' \gamma^2 |w|^{2(\gamma-1)} -K - \frac{|w|^{\beta-1}}{2}\right)|dw|^2  \\
&\geq   -\left(A\lambda(z,|w|) + K^2 |w|^{ \beta +1} \right) |dz|^2  + |w|^{2(\gamma -1)}\left( 2K' \gamma^2 - K |w|^{2(1 - \gamma)} - \frac{1}{2}|w|^{\beta + 1 - 2 \gamma} \right)|dw|^2,
\end{flalign*}
{where in the first inequality we used the fact that $-K|w|^\beta|dw|^2 \geq -K|dw|^2$.}\\
We require that \begin{equation} 2K' > \frac{2K + 1}{2 \gamma^2}, \label{K'1} \end{equation}
in order to have $2K' \gamma^2 - K - \frac{1}{2} > 0 $. Since $ \gamma = \frac{\beta +1}{2} \leq 1,$ and $ 0 < |w| \leq  \delta$,  we find 
 
 \begin{equation}\label{psh1} \alpha + i \partial \bar{\partial}_{(z,w)} f_{\delta}  >  -\left(A\lambda(z,|w|) + K^2 |w|^{ \beta +1} \right) |dz|^2 . \end{equation} 
By definition of $\lambda$ (see \eqref{Lelong}) and $f_\delta$ we have that 
$$  \frac{\partial  f_{\delta}(z,e^s)  }{\partial s}= \lambda(z,e^{s}) - 2Ke^{2s} + 2 \gamma K' e^{2 \gamma s} = \lambda(z, e^{s}) + 2e^{2 \gamma s} (  \gamma K' - K e^{2(1- \gamma)s}).$$
Now we also require that  
\begin{equation} K' > \frac{ K}{\gamma}    \label{K'2} \end{equation}
and we find that 
\begin{equation}\label{eq lelong}
 \frac{\partial  f_{\delta}(z,e^s)  }{\partial s} > \lambda(z, e^{s})>  0 
 \end{equation}
 for $s \leq  0. $ For example we could take 

$$ K' = 2K + 2. $$
Since the function $P(0)$ is finite at $x_0$ (recall that $x_0\in \Omega$), by (\ref{Lelong}) we have 
\begin{equation} \lim_{s \rightarrow - \infty}  \frac{\partial  f_{\delta}(z,e^s)  }{\partial s} = 0. \label{limit} \end{equation} \\
By \eqref{psh1} and the fact that $|w|\leq 1$, it follows that 
\begin{equation*}
\alpha + i \partial \bar{\partial}_{(z,w)} f_{\delta}  >  -\left(A\lambda(z,|w|) + K^2 \right) |dz|^2.
\end{equation*}
 By  \eqref{limit} the function  $s \rightarrow f_{\delta}(z,e^s) - cs$ is strictly decreasing near $- \infty,$ then this function takes a minimum value  in the interval $- \infty < s \leq \log\delta$ at some point $s_0. $  Set $t_0 = e^{s_0}.$ If $t_0 = \delta$ then $\frac{\partial  f_{\delta}(z,e^s) }{\partial s} - c  \leq 0 $ in $(-\infty, \log \delta]$, in particular  by \eqref{eq lelong} $\lambda(z, t_0) < c.$ If $t_0 < \delta, $ then $ \frac{\partial  f_{\delta}(z,e^s)  }{\partial s}_{\big|s=s_0} = c,$ and again $\lambda(z,t_0) < c.$ 
 Let $\eta$ and  $\sigma$ be   local potentials  of $\alpha$ and  of $\omega$, respectively, in an open neighborhood of $\bar{U}.$ It  follows from inequality \eqref{psh1} that the function  $\eta(z) + C_0 \sigma(z)+ f_{\delta}(z,w) - c \log |w| $ is strictly plurisubharmonic in $z$ and $w$ in an open  neighborhood of the point $(x_0,t_0)$  in $X \times ( \mathbb{C} \setminus \{0 \}),$ of the form $V \times \{ t_0 - \rho < |w| < t_0 + \rho \}$ for some $\rho > 0$ where 
$$ C_0 := Ac + K^2\delta^{2 \gamma} $$ (recall that $2\gamma=\beta+1$). If we apply the above to every point $x_0 \in \Omega,$ by Kiselman's minimum principle and by (\ref{psh1}), we have  that $\alpha + i \partial \bar{\partial} \psi_{c,\delta} \geq - C_0 \omega$  in $\Omega.$ 
Moreover $\psi_{c,\delta}(x) \leq  f_{\delta}(x, \delta) - c\log(\delta) \leq 0$ since $\Psi \leq 0,$  so, since $X \setminus \Omega$ is a closed analytic set,  the estimate $\alpha + i \partial \bar{\partial} \psi_{c,\delta} \geq - C_0 \omega$ extends to all of $X.$\\
 Let $ \mu := \frac{C_0}{C_0 + 1},$ so that $0 < \mu < 1,$ and consider the function $ u =  (1- \mu) \psi_{c,\delta}(z) + \mu \Phi(z),$ then we have $u \leq 0$ and 
$\alpha + i \partial \bar{\partial} u \geq   ( -(1 - \mu)C_0  + \mu)) \omega = 0.$ 
So, the function $u$ extends to an $\alpha$-psh non-positive function on $X,$ in particular $u \leq P(0).$
If $x_0 \in \Omega$, then in local $\omega$ normal  holomorphic coordinates this gives:
$$ P(0)(z) \geq (1 - \mu) \psi_{c,\delta}(z) + \mu \Phi(z) \geq    \psi_{c,\delta}(z) + \mu \Phi(z) $$  since   $\psi_{c,\delta}(z) \leq 0.$  In other words:  

$$ P(0)(z) \geq  \Psi(z,t_0) + K' t_0^{2 \gamma} - K' \delta^{2 \gamma} -c \log(t_0/ \delta) + \mu \Phi(z).  $$ 
On the other hand by \eqref{eq lelong}, the function $t \rightarrow \Psi(z,t) + K' t^{2 \gamma}$ is increasing and it coincides with $P(0)(z)$ at $t=0,$ so 
\begin{equation}  P(0)(z) \leq \Psi(z,t_0) + K' t_0^{2 \gamma} \leq P(0)(z)  + K' \delta^{2 \gamma}  + c \log(t_0/ \delta) - \mu \Phi(z).  \label{mainineq1} \end{equation}     
From (\ref{mainineq1}) we first derive
$$ c \log(t_0/ \delta)  \geq - (  K' \delta^{2 \gamma} + \mu |\Phi(z)|)$$ (recall that $\Phi \leq -1$), which  is equivalent to 
\begin{equation}  \delta \exp \left( -\frac{1}{c}( K' \delta^{2 \gamma} + \mu |\Phi(z)|) \right)  \leq t_0  \leq \delta. \label{Phi} \end{equation}   
Again from (\ref{mainineq1}) we derive:
\begin{equation*}  0 \leq   \Psi(z,t_0)  - P(0)(z) +  K' t_0^{2 \gamma} \leq    K' \delta^{2 \gamma}  + c \log(t_0/ \delta) + \mu  |\Phi(z)| \leq 
K' \delta^{2 \gamma}   + \mu  |\Phi(z)|. \label{ineq0} 
\end{equation*} 
Observe that   $  \frac{\mu}{C_0} \leq 1, $ and if we let $c = \delta^{2 \gamma},$  then $ \frac{C_0}{c} =  A + K^2$,   then $\frac{\mu}{c}  \leq  A + K^2 $ and $K' + \frac{\mu}{c} | \Phi(z)| \leq   K' +  \left(A + K^2\right)|\Phi(z)|  \leq B |\Phi(z)|$ where $B>1$ is constant. We set
\begin{equation} R(x) := B|\Phi(x)|. \label{R} \end{equation}
Therefore 
 \begin{equation} \delta \exp(-R(z)) \leq t_0 \leq \delta. \label{ineq1} \end{equation}
 and
\begin{equation} 0 \leq  \frac{ \Psi(z,t_0)  - P(0)(z) +  K' t_0^{2 \gamma}}{ \delta^{2 \gamma} }  \leq  R(z).  \label{ineq2} \end{equation} Observe that the functions $\Psi$ and $R$ are defined in all of $\Omega$ not only locally. 
For fixed $x \in \Omega$  and $0 < \delta' < \exp(-R(x))$ we choose $ \delta := \exp(R(x)) \delta'   < 1.  $ Observe that by \eqref{ineq1} we have $\delta'\leq t_0$. Since $\Psi(x,t) + K' t^{2 \gamma}$ is increasing in $t$ we then get
\begin{eqnarray}\label{ineq_final_gamma}
\nonumber 0 &\leq &  \frac{ \Psi(x,\delta')  - P(0)(x) + K' {\delta'}^{2 \gamma} }{\delta'^{2\gamma}}  \\
\nonumber & \leq &  \frac{ \Psi(x,t_0)  - P(0)(x) + K' t_0^{2 \gamma} }{\delta'^{2\gamma}} \\
 \nonumber &= & \exp( 2\gamma R(x))\left( \frac{ \Psi(x,t_0)  - P(0)(x) + K' t_0^{2 \gamma} }{\delta^{2\gamma}} \right) \\
& \leq &  R(x) \exp( 2 \gamma R(x)) =
 R(x)\exp( (\beta  + 1)  R(x)).
\end{eqnarray}
\smallskip

We conclude the proof by posing  
$c = \delta^{2 \gamma}, C_2 = K'= 2K + 2, C_1(x) =  R(x) \exp( (\beta+1) R(x)) - K', \delta(x) = \exp(-R(x)).$ 
Note that the function $R(x)$ goes to $+ \infty$ as $x $ approaches the boundary of $\Omega,$ while $\delta(x)$ goes to zero. 
As already mentioned, $\Omega=X$ if $\{\alpha\}$ is a K\"ahler class, hence in this case  we can find a uniform upper bound for the function $R(x)$ and a (uniform) lower bound for $\delta(x)$.
\end{proof}

\begin{lemma}\label{lemma_varcoeff}
There exists  continuous positive functions $K(x) > 2, $ and $\delta(x)$  in $\Omega $ of the form $K(x):= L R(x) e^{R(x)}$, and $\delta(x) := S e^{-R(x)}$ with $L>1$ and $0 < S< \frac{1}{2}$ constants, such that:
$$
\left( \alpha(z)+ i\partial \bar{\partial}_{(z,w)} \Psi(z,w) \right)[\theta,\tau]^2 \geq  
$$
\begin{equation}\label{varcoeff00} 
\left( -\left(A\lambda(z,|w|) +K(z)|w|^{2} \right) |dz|^2  -K(z)|w| |dw|^2\right)[\theta,\tau]^2 -K(z)|w||\theta||\tau|
\end{equation} \label{variable} 
if $0 < |w| < \delta(x).$
\end{lemma}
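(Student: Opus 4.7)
The plan is to combine the H\"older-type control on $\Psi$ provided by Lemma \ref{beta} (applied with $\beta = 0$) with a Lelong--Jensen integration of \eqref{Lelong}, and then to revisit the derivation of Lemma \ref{firstineq} in order to upgrade $I_0$ to a variable-coefficient analogue of $I_1$ of the required form.

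First, I would apply Lemma \ref{beta} with $\beta=0$ (hence $\gamma=1/2$) and set $R(x)=B|\Phi(x)|$ as in \eqref{R}. This provides, for every $x \in \Omega$ and every $0<\delta<e^{-R(x)}$, the quantitative $1/2$-H\"older control
$$0 \leq \Psi(x,\delta)-P(0)(x)+K'\delta \leq R(x)e^{R(x)}\,\delta.$$
Next, I would integrate \eqref{Lelong} in $s$ from $-\infty$ to $\log|w|$; since $z \in \Omega$ forces $\nu_z(P(0))=0$ and $\Psi(z,0)=P(0)(z)$, this produces the Lelong--Jensen identity
$$\Psi(z,|w|)+K|w|^2-P(0)(z)=\int_0^{|w|}\lambda(z,t)\,\frac{dt}{t}.$$
Using that $\lambda(z,\cdot)$ is nonnegative and non-decreasing in $|w|$, a comparison on the interval $[|w|/2,|w|]$ combined with the H\"older bound above yields, for a suitable constant $L>1$,
$$\lambda(z,|w|)\leq L\,R(z)e^{R(z)}\,|w|,\qquad 0<|w|<\tfrac{1}{2} e^{-R(z)}.$$

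Finally, I would revisit the proof of Lemma \ref{firstineq}. There the lower bound for $(\alpha+i\partial\bar\partial_{(z,w)}\Psi)[\theta,\tau]^2$ splits into the main terms $-(A\lambda(z,|w|)+\tilde K|w|)|\theta|^2-\tilde K|\tau|^2-\tilde K|\theta||\tau|$ from \cite[Proposition 3.8]{Dem94} and \cite[page 13]{Dem94}, plus a remainder of the form $O(|w|)(|dz|^2+|dw|^2)[\theta,\tau]^2$ that was absorbed crudely. The key observation is that this remainder can be re-expressed in terms of first-order increments of $\Psi$ in $|w|$, so that the H\"older control from the first step, together with the sharpened bound $\lambda(z,|w|)\leq LR(z)e^{R(z)}|w|$, will let each of the three offending coefficients gain an extra factor of $|w|$ with new constants all dominated by $L R(z) e^{R(z)}$. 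Choosing $K(z):=LR(z)e^{R(z)}$ (possibly after enlarging $L$) and $\delta(z):=Se^{-R(z)}$ with $S<1/2$ sufficiently small then produces exactly \eqref{varcoeff00}.

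The principal technical obstacle is this last step: one has to reopen \cite[Proposition 3.8]{Dem94} and re-estimate its $O(|w|)$ remainder using the H\"older-type control on $\Psi$ provided by Step 1, instead of absorbing it trivially by $-C|w|(|dz|^2+|dw|^2)[\theta,\tau]^2$ as was done in Lemma \ref{firstineq}. This is precisely the point at which the present strategy departs from, and corrects, the argument of \cite{BD}; in particular the variable coefficient $K(z)=LR(z)e^{R(z)}$, which blows up at $\partial\Omega$, is unavoidable and reflects the fact that the H\"older constant produced by Lemma \ref{beta} with $\beta=0$ is not uniform on $\Omega$.
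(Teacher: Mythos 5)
Your high-level plan is the correct one, and it is the same as the paper's: feed the $\gamma=1/2$ control from Lemma~\ref{beta} (applied to $I_0$) back into the error analysis of Demailly's construction to upgrade $I_0$ to a variable-coefficient $I_1$. The Lelong--Jensen step is also correct as far as it goes: integrating \eqref{Lelong} on $(-\infty,\log|w|]$ does yield $\Psi(z,|w|)+K|w|^2-P(0)(z)=\int_0^{|w|}\lambda(z,t)\,dt/t$, and monotonicity of $\lambda$ together with $II_{1/2}$ then gives the pointwise bound $\lambda(z,|w|)\leq LR(z)e^{R(z)}|w|$. However, the third step is where the real difficulty lives, and there the proposal has genuine gaps.

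First, the remainders in Demailly's computation of $i\partial\bar\partial\Psi$ are \emph{not} naturally "first-order increments of $\Psi$ in $|w|$". As the paper makes explicit by tracing through \cite[Proposition 3.8, Theorem 4.1]{Dem94}, the offending terms are of the form $\bigl(\int_{B(0,1)}\partial^2(P(0)+\eta)/\partial z_j\partial\bar z_k(\exph_{z_0}(w\zeta))\,d\lambda(\zeta)\bigr)\cdot O(|w|^3)$ (and similar $O(|w|^2)$ coefficients in $K', K''$). What one needs is a sharpened bound on these \emph{integral averages of the complex Hessian of} $P(0)+\eta$, not a pointwise bound on $\lambda$; note in particular that $\lambda$ already appears untouched on the right-hand side of \eqref{varcoeff00}, so bounding it pointwise does not, by itself, advance the argument. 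The paper obtains the needed bound by first using psh-ness of $P(0)+\eta$ (so the full Hessian is dominated by the Laplacian), then invoking \cite[eq.~(1.16)]{BD} to compare the ball-average of the Laplacian with $\Psi(z_0,t)-P(0)(z_0)$, and finally applying $II_{1/2}$; this gives \eqref{M}, i.e.\ $\int_{B(0,1)}\partial^2 P(0)/\partial z_j\partial\bar z_k(\exph_{z_0}(w\zeta))\,d\lambda(\zeta)=O\bigl(M(z_0)|w|^{-1}\bigr)$ with $M(z_0)\asymp R(z_0)e^{R(z_0)}$, which is what the remainder terms demand. Your bound on $\lambda$ is related in spirit but is not this quantity, and the bridge between them (via Demailly's comparison $|\lambda-\lambda_\Omega|=O(|w|^2)$ and the definition of $\lambda_\Omega$) is not spelled out.

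Second, obtaining $O(|w|^2)$ rather than $O(|w|)$ in the $|dz|^2$ coefficient is not just a matter of re-estimating the $N=2$ remainder more carefully: one must rerun \cite[Proposition 3.8]{Dem94} with $N=3$, i.e.\ take one more order of the Taylor expansion of the exponential map, and then verify that the newly introduced terms are again of the form controlled by \eqref{M}. The proposal does not address this.

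Third, the constants in front of $R(z)e^{R(z)}$ coming from this higher-order expansion a priori depend on the Taylor coefficients of $\exph_z$ in the chosen normal chart and hence on the base point. The paper invokes the result of Jentsch--Weingart to pick normal coordinates in which these Taylor coefficients are expressed in terms of the curvature of $\omega$ and its covariant derivatives, hence uniformly bounded on the compact manifold. Without this (or some substitute), one only gets $K(z)=LR(z)e^{R(z)}$ with $L$ depending on $z$, which is not what the lemma asserts.
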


\begin{remark}
Since for every point $x_0 \in \Omega,$ we work in a neighborhood of $x_0$ where we choose local holomorphic  coordinates which are $\omega$ normal at $x_0,$
if we wish to state Lemma \ref{lemma_varcoeff} in a coordinate-free way we just have to replace  $|\theta|^2 =  |dz|^2[\theta, \tau]^2$ with $\omega[\theta]^2$ and   $|\theta|$ with $\sqrt{\omega[\theta]^2}.$  \label{normalcoord} 
\end{remark} 

\begin{proof}

We choose as above canonical local holomorphic coordinates $(z,\zeta)$ on the tangent bundle of $X$ centered at some point $(x_0,0),$ where $z$ varies in an open set $U,$ and the coordinates  are $\omega$ normal at $x_0.$ Note that for any $z\in U$, ${\exph}_{z}(0) = z$ and $(d\,{\exph}_{z})_0 = Id.$
Then, since $X$ is compact, there exists a positive constant  $S$ such that for any $|\zeta| < |w| < S$  and for any $z\in U$ the norm of determinant of the real Jacobian of ${\exph}_z(\zeta)$ as a function of $\zeta$  is between $1/2$ and $2$ and the function ${\exph}_z$ as a function of $\zeta$ is Lipschitz with Lipschitz constant $2$.  Let $B(z,r) := \{ z' :   |z'-z| < r \}.$
Let $\eta$ be a local potential of $\alpha,$ then
$P(0) + \eta$ is psh. Now fix $z_0 \in U.$ Recall that for any $j,k=1,\cdots n$ we have 

$$  \frac{1} {|w|^{2n}}\left| \int_{B(0,|w|)} \frac{\partial^2  (P(0) + \eta)}{ \partial z_j \partial \bar{z_k}}({\exph}_{z_0}(\zeta)) d \lambda(\zeta) \right|  \leq   \frac{C_3}{|w|^{2n}}\int_{B(0,|w|)} \Delta(P(0) + \eta) ({\exph}_{z_0}(\zeta)) d \lambda(\zeta),$$ 
where $d \lambda  $ is the Lebesgue measure and  $C_3$ is a positive constant. Here $\Delta := \sum_j\frac{ \partial^2  }{ \partial z_j \partial \bar{z_j}}$.

On the other hand, by the uniform bounds on the determinant of the real Jacobian and on the Lipschitz constant on ${\exph}_{z_0}$ we have:
\begin{flalign*}
& \frac{C_3}{|w|^{2n}}\int_{B(0,|w|)}   \Delta(P(0) + \eta) ({\exph}_{z_{0}}(\zeta)) d \lambda(\zeta) \\
  &\leq   \frac{C'_3}{|w|^{2n}}\int_{B(0,|w|)} \Delta(P(0) + \eta) ({\exph}_{z_0}(\zeta))\,|\det({\Jac}_{\mathbb{R}}\, {\exph}_{z_0})| \,{d} \lambda(\zeta)\\   
 &= \frac{C'_3}{|w|^{2n}}\int_{{{\exph}_{z_0}}(B(0,|w|))} \Delta(P(0) + \eta) (u) d \lambda(u ).
\end{flalign*}
Also,
$$|{\exph}_{z_0}(\zeta) -{\exph}_{z_0}( 0)|=|{\exph}_{z_0}(\zeta) - z_{0}| \leq 2|\zeta| \leq 2|w| \qquad {\rm on}\; |\zeta| < |w|.$$ This means that ${\exph}_{z_0}(B(0,|w|) ) \subseteq B(z_0,2|w|)$. 
 Therefore
 
\[  \frac{C'_3}{|w|^{2n}}\int_{{\exph}_{z_0}(B(0,|w|))} \Delta(P(0) + \eta)(u) \,d \lambda(u ) \leq   \frac{C'_3}{|w|^{2n}}\int_{ B(z_0,2|w|) } \Delta(P(0) + \eta) (u) \,d \lambda(u ).\]  
\smallskip

\noindent Given  $0 < a < 1,  t  > 0,  \gamma \in [0,1], $ by \eqref{eq00} and by \cite[eq. (1.16)]{BD}  we have:
\begin{eqnarray*}   
\frac{ \Psi(z_0,t)  - P(0)(z_0) }{t^{2 \gamma}} & = & \frac{ \Psi(P(0) + \eta)(z_0,t)  - P(0)(z_0) - \eta(z_0)}{t^{2 \gamma}} - O(t^{2-2\gamma})\\
&\geq & \frac{ C_5(a)}{t^{2 \gamma}} \left(   \frac{1}{(at)^{2n-2}} \int_{B(z_0,at)} \Delta (P(0) + \eta)(u)  d \lambda(u) \right) - C_4 t^{2 - 2 \gamma}
 \end{eqnarray*}
for  positive constants $C_4, C_5(a).$ 

The previous inequality with $ \gamma = \frac{1}{2}, 0 < a < 1, t > 0,$ gives
$$\frac{ \Psi(z_0,t)  - P(0)(z_0) }{t} \geq  a^2 C_5(a) \left(   \frac{1}{(at)^{2n}} \int_{B(z_0,at)} \Delta(P(0) + \eta)(u) \, d \lambda(u) \right)t - C_4 t.$$
Since condition $I_0$ holds, thanks to Lemma \ref{beta} we can ensure that $II_{1/2}$ holds as well. Thus we have \eqref{ineq_final_gamma} with $t = \delta' < \delta(x)$ and $\gamma=1/2$:
$$0\leq   \frac{ \Psi(z_0, t)  - P(0)(z_0) + K' t }{t} \leq R(z_0) \exp( R(z_0)).$$
Combining the above inequalities we get
$$R(z_0) \exp( R(z_0)) - K' + C_4 t  \geq  a^2 C_5(a) \left(   \frac{1}{(at)^{2n}} \int_{B(z_0,at)} \Delta(P(0) + \eta)(u)\,  d \lambda(u) \right)t, $$
i.e.
$$0 \leq  \left(   \frac{1}{(at)^{2n}} \int_{B(z_0,at)} \Delta(P(0) + \eta) (u)d \lambda(u) \right)  \leq \frac{R(z_0) \exp( R(z_0)) - K' + (\frac{C_4}{a}) at }{(a C_5(a))\,at}.  $$ 
Taking $a = 1/2, at = 2|w| $, for any $j,k=1,\cdots, n$ we get:

$$  \frac{1} {|w|^{2n}}\left| \int_{B(0,|w|)} \frac{\partial^2  (P(0) + \eta)}{ \partial z_j \partial \bar{z}_k}({\exph}_{z_0}(\zeta))\, d \lambda(\zeta) \right| \leq  C_6 |w|^{-1}\left({R(z_0)\exp(R(z_0)) - K' + 4 C_4 |w|}\right). $$ 
Observe that all the above constants are independent on the choice of the point $z_0 \in U.$
Now  we consider the global function   $M(z) :=  B' R(z)\exp{R(z)}$, where $B'>1$ is a constant such that  
$$M(z)>C_6 ({R(z)\exp(R(z)) - K' + 4 C_4 }). $$
Since $|w|\leq 1$, it then follows that for any $j,k$
\begin{eqnarray*}
 \int_{B(0,1)} \frac{\partial^2 (P(0)+\eta) }{\partial z_j \partial \bar{z}_k}  ({\exph}_{z_0}(w\zeta) ) d\lambda(\zeta) &= &  \frac{1} {|w|^{2n}} \int_{B(0,|w|)} \frac{\partial^2  (P(0) + \eta)}{ \partial z_j \partial \bar{z}_k}({\exph}_{z_0}(\zeta)) d \lambda(\zeta) \\
&\leq & M(z_0) |w|^{-1}.
\end{eqnarray*}
The above inequality ensures that 
\begin{equation*}
 \int_{B(0,1)} \frac{\partial^2 (P(0)+\eta) }{\partial z_j \partial \bar{z}_k}  ({\exph}_{z_0}(w\zeta) ) \,d\lambda(\zeta) = O( M(z_0) |w|^{-1}).
\end{equation*}
Moreover, the fact that $ \int_{B(0,1)} \frac{\partial^2 \eta }{\partial z_j \partial \bar{z}_k}  ({\exph}_{z_0}(w\zeta) ) d\lambda(\zeta)  = O(1)$ 
gives for  small $|w|$
\begin{equation}\label{M}
 \int_{B(0,1)} \frac{\partial^2 P(0) }{\partial z_j \partial \bar{z}_k} ({\exph}_{z_0}(w\zeta) ) \,d\lambda(\zeta) = O( M(z_0) |w|^{-1}).
\end{equation}
Observe that, a priori,  by \cite[eq. (3.11)]{Dem94} we only have that the left hand side quantity is $O(|w|^{-2})$.

We now follow the arguments in the proof of \cite[Theorem 4.1]{Dem94}. 
By \cite[Proposition 3.8]{Dem94} we have that 
$$\partial \bar{\partial}_{(z,w)} \Psi(z,w)[\theta,\tau]^2\big|_{z=z_0}= \int_{ \zeta \in T_{z_0} X }  \partial \bar{\partial}_{(z,w)} P(0) (\tau' \wedge \bar{\tau'} +|w|^2 V)_{{\exph}_{z_0}{(w\zeta})}\;  \chi(|\zeta|^2)  \; dV_\omega +O(|w|^{N-1})$$ 
where $N$ is a positive integer and the vector fields $\tau'$ and $V$ are the ones defined at \cite[page 8]{Dem94}. We observe that such vector fields depends on $N$ (the multi-indexes $\alpha, \beta$  are indeed such that $2\leq |\alpha|, |\beta|\leq N$). In order to obtain the estimates we want, we need to apply \cite[Proposition 3.8]{Dem94} with $N=3,$ whereas Demailly applies it with $N = 2.$
On the other hand, by the observation in the last lines of the proof of \cite[Proposition 3.8]{Dem94} we know that when we increase $N$ by one unit (in this case from $2$ to $3$) the contribution of the additional terms (which corresponds to the contribution of all multi-indexes $\alpha,\beta$  with $|\alpha|$ and $|\beta|$  equal to $N+1=3$) in the expansion of $\partial \bar{\partial} \Psi(P(0))_{(z,w)}$  are of the form 

$$\left( \int_{B(0,1)}\frac{\partial^2  P(0) }{\partial z_j \partial \bar{z}_k}  ({\exph}_{z_0}(w\zeta) ) d\lambda(\zeta) \right) O(|w|^3).$$  By \eqref{M} these terms are of type $O(M(z_0)|w|^2).$\\
 \smallskip
 It then follows that  

 \begin{eqnarray}\label{iteration_step}
&& \partial \bar{\partial}_{(z,w)} \Psi(z,w)[\theta,\tau]^2 \big|_{z=z_0}\\
\nonumber &&= \int_{ \zeta \in T_{z_0} X }  \partial \bar{\partial}_{(z,w)} P(0) (\tau' \wedge \bar{\tau'} +|w|^2 V)_{{\exph}_{z_0}{(w\zeta})}\;  \chi(|\zeta|^2)  \; dV_\omega  +O(M(z_0)|w|^2) + O(|w|^2),
 \end{eqnarray}
 where the vector fields $\tau'$ and $V$ are the ones defined with $N=2$. \\
 In the proof of \cite[Theorem 4.1]{Dem94} the estimates for $\partial \bar{\partial}_{(z,w)} \Psi(z,w)[\theta,\tau]^2$ are given up to terms $O(|w|)$ (coming from \cite[Proposition 3.8]{Dem94} applied with $N=2$), and  up to a term of the form 
$$\left( \int_{B(0,1)}\frac{\partial^2  P(0) }{\partial z_j \partial \bar{z}_k}  ({\exph}_{z_0}(w\zeta) ) d\lambda(\zeta) \right) O(|w|^3).$$
Therefore applying \cite[Proposition 3.8]{Dem94} with $N=3$ and observing that in our case the latter term is  $O(M(z_0)|w|^2)$ thanks to \eqref{M}, we can use the arguments in the proof of \cite[Theorem 4.1]{Dem94} to estimate $\partial \bar{\partial}_{(z,w)} \Psi(z,w)[\theta,\tau]^2$ adding an error term of the form $O(M(z)|w|^2).$

Thus 
\cite[eq. (4.3)]{Dem94}  and the formula $\tau = \theta + \eta \zeta +O(|w
|)$ at the end of page 14 in
\cite{Dem94} give that
$$
\left( \alpha(z)+ i \partial \bar{\partial}_{(z,w)} \Psi(z,w)\right)[\theta,\tau]^2 \geq  
 $$
$$  -\left(A\lambda(z,|w|) + C|w|^2 \right) |dz|^2   - CM(z)|w|^2\,( |dz|^2 + |dw|^2))[\theta,\tau]^2  - K''(|\theta||\tau|+ |\tau|^2) $$
where  $C>0$  and $ |w| < \delta_0 \leq 1.$ Thanks to \cite[eq. (3.11)]{Dem94} and \eqref{M}, the coefficient $K''$ is of the form  $O(M(z))|w|$. Moreover, we observe that the coefficient of $|dz|^2$ is of that form because of \eqref{lelong_a} and we stress again that the term $CM(z)|w|^2\,( |dz|^2 + |dw|^2)$ in the lower bound comes from \eqref{iteration_step}.

Now, besides the dependence on $M(z_0),$ as we already saw in the proof of Lemma \ref{ineq00}, the various ``$O$-terms"  are uniformly bounded, thus $O(|w|^2)$ is bounded by $C|w|^2(|\theta|^2 + |\tau|^2)$ for some positive constant $C.$ 
Finally we collect together all terms multiplying $|\theta|^2,$ all terms multiplying $|\eta|^2,$ and all terms multiplying $|\theta||\tau|,$   The estimate in \eqref{varcoeff00} then holds with $0 < |w| < Se^{-R(z)}$ and $K(z) := {L}M(z)$ for some positive constant $L.$
\end{proof}

\subsection{The K\"ahler case}
In the K\"ahler case the lower bound \eqref{varcoeff00} holds with constant coefficients. This allows us to reprove regularity in the K\"ahler case in the spirit of \cite{BD}. 

\begin{corollary}\label{kaheler}
If $\{ \alpha \}$ is a K\"ahler class, then the estimates  $I_1$ and $II_1 $   with constant coefficients holds on $X$. In particular the function $P(0)$ is $C^{1,\bar{1}}$ on $X$.  
\end{corollary}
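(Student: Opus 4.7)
The plan is to deduce the corollary by specializing the preceding chain of lemmas to the K\"ahler setting, where the auxiliary potential $\Phi$ is globally smooth so the ``boundary blow-up'' of the variable coefficients simply disappears.

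First I would record that when $\{\alpha\}$ is K\"ahler we may take $\omega = \alpha + i\partial\bar{\partial}\Phi$ with $\Phi$ smooth on all of $X$; in particular $\Omega = X$ and the function $\Phi$ is uniformly bounded. Consequently the function $R(x) = B|\Phi(x)|$ defined in \eqref{R} is bounded from above by a constant $R_0 > 0$, which forces the ``variable'' control quantities $K(x) = L R(x) e^{R(x)}$ and $\delta(x) = S e^{-R(x)}$ produced by Lemma \ref{lemma_varcoeff} to satisfy $K(x) \leq K_0$ and $\delta(x) \geq \delta_1$ for uniform positive constants $K_0, \delta_1$.

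Next, feeding these uniform bounds into inequality \eqref{varcoeff00} of Lemma \ref{lemma_varcoeff} immediately produces the estimate $I_1$ with constant coefficients $A, K_0$, valid on all of $X$ for every $0 < |w| < \delta_1$. I would then invoke Lemma \ref{beta} with $\beta = 1$ (hence $\gamma = 1$); the concluding sentence of that lemma guarantees that in the K\"ahler case the quantities $C_1(x)$ and $\delta(x)$ appearing in $II_\gamma$ may be chosen to be positive constants. This yields the estimate $II_1$ with uniform constants on all of $X$, exactly as claimed.

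Finally, with $II_1$ now available globally with uniform constants, I would apply Lemma \ref{ineqreg}: its hypothesis is precisely the constant-coefficient $II_1$ on an arbitrary relatively compact open $U \Subset \Omega$. Since in the K\"ahler case $\Omega = X$ is itself compact and the bounds are uniform, $U$ can exhaust $X$, and the conclusion $P(0) \in C^{1,\bar{1}}(U)$ propagates to $P(0) \in C^{1,\bar{1}}(X)$. There is no substantive obstacle: the technical work has already been carried out in the preceding three lemmas, and the K\"ahler assumption only serves to eliminate the degeneration of the coefficients near $\Sigma = X \setminus \Omega$, which is empty in this setting. The argument is thus essentially a bookkeeping step extracting the K\"ahler consequence from the more general big-class machinery.
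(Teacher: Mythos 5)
Your proposal is correct and follows essentially the same chain as the paper's proof: bound $R$ (hence $K(x)$, $\delta(x)$) uniformly in the K\"ahler case, read off $I_1$ with constant coefficients from Lemma \ref{lemma_varcoeff}, apply Lemma \ref{beta} with $\beta=1$ to get $II_1$ with constant coefficients, and conclude via Lemma \ref{ineqreg}. The only cosmetic difference is that the paper explicitly re-records the intermediate step ``$II_{1/2}$ with constant coefficients'' and speaks of re-running the arguments of Lemma \ref{lemma_varcoeff}, whereas you simply cite its (unconditional) conclusion \eqref{varcoeff00} and specialize; both are logically fine.
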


\begin{proof}
By Lemma \ref{firstineq} we have $I_0.$  Since the class $\{ \alpha \}$ is K\"ahler, Lemma \ref{beta} ensures that $II_{1/2}$ holds with 
constant coefficients (i.e. with $C_1(x), \delta(x)$  constants). Using {the arguments in} Lemma \ref{variable} we derive \eqref{varcoeff00} where, since $\{\alpha\}$ is K\"ahler,  the function $ R(x)$  is uniformly bounded from  above by a big positive constant, hence inequality $I_1$ holds with constant coefficients. Then by Lemma \ref{beta} we have $II_1.$ By Lemma \ref{ineqreg} we conclude that $P(0)$ is $C^{1,\bar{1}}.$
\end{proof}
\smallskip

The corollary above was also proved in \cite{Ber19} using Monge-Amp\`ere equations and the so-called $\beta$-convergence.

\section{Lower bound of the hessian II: the trick of the line bundle}\label{section_line}
The same arguments in Corollary \ref{kaheler} can not be used in the big case since the coefficients appearing in the lower bound of the hessian are functions that explodes when approaching the boundary of $\Omega$. The idea is then to work on a high power of a suitable  line bundle in order to   ``kill" such functions after a  change of coordinates.
After that, the reasoning will be similar to the  above.

\medskip

 As mentioned in Section \ref{s1} , the K\"ahler current $T=\alpha+i \partial \bar{\partial} \Phi$ can be chosen to have analytic singularities along $\Sigma,$  so there exists a modification $q : \tilde{X} \rightarrow X$, (which is a sequence of blow-ups with smooth center)  such that $q^*T = \tilde{\omega} + \sum_{1 \leq j \leq N} c_j [D_j],$ where $c_j$ are positive real numbers, $\tilde{\omega}$ is a semipositive smooth real closed $(1,1)$-form and $[D_j]$ is the current of integration along an irreducible divisor $D_j$ \cite[Theorem 3.4]{DP}. Moreover the map $q$ gives a biholomorphism between $q^{-1}(\Omega)$ and $\Omega.$ \\
 Observe that if $\varphi$ is a non-positive $\alpha $-psh function of $X,$ then $\varphi \circ q$ is a non-positive $q^*\alpha$-psh function on $\tilde{X}.$ Vice-versa if $\psi$ is non-positive $(q^*\alpha)$-psh on $\tilde{X},$ then the restriction of $\psi$ to $q^{-1}(\Omega)$ is of the form $\varphi \circ q $ for some non-positive $\alpha $-psh function $\varphi$  on $\Omega.$ Such function $\varphi$ can be extended to all of $X,$ since $\Sigma=X\setminus \Omega$ is a closed pluripolar subset, hence $\psi = \varphi \circ q$ in all of $X.$ It follows that the largest non-positive $(q^*\alpha)$-psh function on $\tilde{X}$ is  $P(0) \circ q.$ Since $q$ gives a biholomorphism from $q^{-1}(\Omega)$ to $\Omega,$ showing that $P(0)$ is $C^{1,\bar{1}}$ in $\Omega$ is equivalent to show that so is $P(0) \circ q $ in 
$q^{-1}(\Omega).$ 
In the sequel we will then assume $X = \tilde{X},$  $ q = Id_X,$ $\Sigma = \bigcup_{1 \leq j \leq N}  D_j,$ and $T = \alpha + i \partial \bar{\partial} \Phi = \tilde{\omega} + \sum_{1 \leq j\leq N} c_j [D_j]$. 

Also, recall that if $h$ is an hermitian metric on an holomorphic line bundle $L,$  and $\sigma$ is a non-zero holomorphic section of $L,$ then the Poincar\'e-Lelong formula gives $[D] = \frac{i}{2\pi}\left(  \Theta_h+ \partial\bar{\partial} \log|\sigma|^2_h  \right),$ where  $\Theta_h$ s the curvature of $h$ and  $D$ is the zero set of $\sigma.$  
In particular if $L_j,$ $\sigma_j$ are, respectively,  the line bundle and the holomorphic section associated to the divisor $D_j$ and $h_j$ is a smooth hermitian  metric on $L_j$, we have
\[ T = \tilde{\omega} + \frac{i}{2 \pi}   \sum_{j=1}^N c_j   \left(  \Theta_{h_j} + \partial \bar{\partial} \log|\sigma_j|^2_{h_j} \right) =  \alpha + i \partial \bar{\partial}\Phi.  \]
Then $i \partial \bar{\partial} \left( \sum_j \frac{c_j}{ 2\pi}  \log|\sigma_j|^2_{h_j}  - \Phi\right)$ is smooth, hence 
\[ \Phi = \sum_j \frac{c_j}{2\pi}  \log|\sigma_j|^2_{h_j} + \mbox{a smooth function on $X$}. \]  

\noindent Given $p \in \Sigma$, let $U_p(1)$ and $U_p(2), U_p(3)$  open neighbourhoods of $p$ such that $U_p(2)$ is relatively compact in 
$U_p(1),$ and $U_p(3)$ is relatively compact in $U_p(2).$  
Assume that there exists in $U_p(1)$ holomorphic local coordinates $z$  for $X$ and holomorphic nowhere zero sections  of $L_j.$\\
Let $\{ {V'_i}\}_{\{ 1 \leq j \leq \ell \}}$ be a finite sub-covering of the covering 
$ \{U_p(2)\}_{p \in \Sigma }$ of $\Sigma,$ and set  $\{ {V_i}\}_{\{ 1 \leq j \leq \ell \}}$ the corresponding sub-covering of 
$\{ U_p(1) \}_{ p \in \Sigma }.$ \\
Let $U = \bigcup_{ p \in \Sigma} U_p(3),$ and $\tilde{H} = X \setminus U,$ then $\tilde{H}$ is compact.  Given $q \in \tilde{H}$, let $U_q(1)$ and $U_q(2)$  open neighborhoods of $q$ such that $U_q(2)$ is relatively compact in 
$U_q(1),$ $\overline{U_q(1)} \cap \Sigma = \emptyset$ and assume that in $U_q(1)$ there exists holomorphic local coordinates $z$ and holomorphic nowhere zero sections  of $L_j.$ Let $\{{V'_i}\}_{ \ell +1 \leq j \leq r}$ be a finite sub covering of the covering $\{ U_q(2) \}_{  q \in \tilde{H} }$ of $\tilde{H},$ and set  $\{{V_i}\}_{\{ \ell + 1 \leq j \leq r \}}$ the corresponding subcovering of 
$\{ U_q(1) \}_{q \in \tilde{H}}.$ 
\smallskip

Then we obtain  $\{ V_i \}_{\{1 \leq i \leq r \}}$  and  $\{ V'_i \}_{\{1 \leq i \leq r \}}$ finite open coverings of $X,$ with the following pro\-per\-ties: 
\begin{itemize}
\item[1.] $V'_i$ is relatively compact in $V_i,$  for $1 \leq i \leq r,$

\item[2.]   $\{V'_i \}_{ \{ 1 \leq i \leq \ell \}} $ covers $\Sigma,$
\item[3.]   $\overline{V_i} \cap \Sigma = \emptyset$ for $ \ell +1  \leq i \leq r$, 
\item[4.]   there exist local holomorphic coordinates $z$ on the  open  set $V_i,$ and nowhere zero holomorphic sections $\eta_{i,j}$ of the line bundle $L_j|_{V_i}$.
\end{itemize}
 Therefore we have 
 \begin{equation} 0 < a_{i,j} < |\eta_{i,j}|^2_{h_i} < b_{i,j}. \label{ab} \end{equation} on $V'_i$ for $1 \leq i \leq r,$ 
 for some positive constants $a_{i,j},b_{i,j}.$  Let $g_{i,j} := \frac{\sigma_i^2}{\eta_{i,j}^2}.$ By rescaling the section $\sigma_j$ (if necessary) we can assume that $ |\sigma_j|^2_{h_j} <\frac{ 1}{2}(\min_{i}  a_{i,j}).$ Recall that the covering   $\{V'_i \}_{ \{ 1 \leq i \leq r \}} $ is finite.  Then by construction it follows that $|g_{i,j}| < 1/2 $ on $V'_i.$ Moreover $g_{i,j} $ is holomorphic, it is  nowhere zero  on $V'_i \setminus D_j$  and  $g_{i,j} \equiv 0$ on $V'_i \cap D_j,$ for $1 \leq i \leq r$ and $1 \leq j \leq N.$ 
 \medskip

Also, by definition  on $V'_i,$    we can re-write $\Phi $  as
\begin{equation} \Phi = \sum_j \frac{c_j}{2\pi}  \log| g_{i,j}| + \mbox{a smooth  bounded function on $V'_i$}.  \label{analitsing} \end{equation}  
for $1 \leq i \leq r.$

Given a positive integer $m$ to be suitably chosen later, we let 
\[ L = (L_1 \otimes L_2 \ldots \otimes L_N)^{2m}, \] with its holomorphic section $s := (\sigma_1 \otimes \sigma_2  \ldots \otimes \sigma_N)^{2m}.$ Let $L^*$ the dual bundle of $L.$ Let $h = (h_1 \otimes h_2 \ldots \otimes h_N)^{2m}$ be the corresponding hermitian metric on $L,$ and let $h^*$ be the dual metric on $L^*.$  Let $M(L^*)$ be the total space of the line bundle $L^*$ and $\pi: M(L^*) \to X$ be the natural projection.  
Set
\[ W :=  \{ \beta   \in M(L^*) \setminus \{ \mbox{zero section} \} :    0 < | \beta(s(\pi(\beta))| < Se^{-R(\pi(\beta))}, \;\pi(\beta) \in \Omega \}, \] 
where $S$ is the constant appearing in Lemma \ref{lemma_varcoeff}.
Let $\tilde{\Psi} : W \to \R$ give by 
\[ \tilde{\Psi}(\beta) =: \Psi(\pi(\beta), \beta(s(\pi(\beta))).\] The map  $\tilde{\Psi}$ is smooth and for all $\beta$ in  
$M(L^*) \setminus \{ \mbox{zero section} \}$ we have  $ \lim_{\xi \in \C, \,\xi \to 0} \tilde{\Psi}( \xi \beta) = P(0)(\pi(\beta)).$

\noindent On each open set $V'_i,  1 \leq i \leq r $ we have a natural trivialization of $L^*$ given by $(z,\xi) \to (z,\xi \eta^*(z)),$ where $\eta^*$ is the dual section of 
$ \eta := (\eta_{i,1}  \otimes \eta_{i,2} \otimes \ldots \otimes \eta_{i,N})^{2m}.$ Then in local holomorphic coordinates we have 
\[ \tilde{\Psi}(z,\xi) = \Psi(z, G_{i}(z) \xi) \quad  {\rm on} \;V'_i \] where $G_{i} =  (g_{i,1}\, g_{i,2} \ldots g_{i,N})^m$.

\medskip

\noindent In the following we aim at computing the complex hessian of $\tilde{\Psi}$ in these coordinates.\\
Recall that the map $(z,\xi) \to (z,G_i(z) \xi)$ is holomorphic, hence if $(\theta,\tau)$ is a tangent vector to a point in $V'_i$  with $\theta$ tangent to $X,$ and $\tau \in \C,$  
 then the Levi form of $\tilde{\Psi}$ computed at the point $(z,\xi)$ on the vector $(\theta,\tau)$ is the  Levi form of $\Psi$ computed at the point $(z,G_i(z) \xi)$ on the vector 
 \begin{equation} {\Jac}( z,G_{i}(z) \xi)(\theta,\tau) = \left( \begin{array}{l}  \theta \\ \xi d_zG_{i}(\theta) + G_i(z) \tau  \end{array} \right) \label{jac} \end{equation}  
 where $\Jac$ is the holomorphic Jacobian.
\smallskip
 
\noindent Moreover, since for $1 \leq i \leq r$ the open set $V_i'$ is relatively compact in $V_i$ and both $\omega$ and $|dz|^2$ are K\"ahler forms in $V_i$, then $ \frac{ |dz|^2}{M}\leq \omega \leq  M|dz|^2$ on each $V'_i$ for some real number $M > 1.$ Since the covering is finite we may choose $M$ independent of $i.$\\
We then obtain the lower bound for the hessian of $\Psi$ in the coordinates on each $V'_i.$ More precisely, Lemma \ref{lemma_varcoeff}, Remark \ref{normalcoord} and the above observation insures that, up to replacing $A$ with $AM,$ and $K(z)$ with $K(z)M$ (but keeping the same name $A,K(z)$) we have
 
$$
\left( \alpha(z)+ \partial \bar{\partial}_{(z,w)} \Psi(z,w) \right)[\theta,\tau]^2 \geq  
$$
\begin{equation}\label{varcoeff} 
\left( -\left(A\lambda(z,|w|) +K(z)|w|^{2} \right) |dz|^2  -K(z)|w| |dw|^2\right)[\theta,\tau]^2 -K(z)|w||\theta||\tau|
\end{equation} 
where $K(z)={L} R(z)e^{R(z)}$, $L>1$, and $0 < |w| < Se^{-R(z)}$.
\medskip

If $\beta \in W,$ let $(z,\xi)$  are the coordinates of $\beta$ in the open set $V'_i,$ then from \eqref{varcoeff} and \eqref{jac},  we derive that 
\begin{flalign*}
&\left( \alpha(z)+ i \partial \bar{\partial}_{(z,\xi)} \tilde{\Psi}(z,\xi) \right)[\theta,\tau]^2  \\
 &\geq    -\left(A\lambda(z,|G_i(z)||\xi|) +K(z)|G_i(z)|^2 |\xi|^{2} \right) |\theta|^2  -K(z)|\xi||G_i(z)| |\xi d_zG_i(\theta)+ G_i(z) \tau)|^2\nonumber  \\
 & \quad\, - K(z)|\xi||G_i(z)| |\theta|| \xi d_zG_i(\theta) + G(z) \tau|   \nonumber \\
  & \geq    -\left(A\lambda(z,|G_i(z)||\xi|) +K(z)|G_i(z)|^2 |\xi|^{2} \right) |\theta|^2 
   -K(z)|\xi| |G_i(z)| (  |\xi |^2| {d_zG_i}(\theta)|^2+ |G_i(z)|^2 |\tau|^2 \nonumber \\
   & \quad\, + 2 |\xi||G_i(z)||{d_zG_i}(\theta)||\tau| ) -K(z)|\xi||G_i(z)|  |\theta|| \xi| |d_z{G_i}(\theta) |   -K(z)|\xi||G_i(z)||\theta| | G_i(z)|| \tau|   \nonumber\\
& \geq    -\left(A\lambda(z,|G_i(z)||\xi|) +K_1(z,\xi) \right)  |\xi|^{2}  |\theta|^2  -K_2(z)|\xi| |\tau|^2 - K_3(z,\xi) |\xi||\theta||\tau|  \nonumber\\
& \geq  -\left(A\lambda(z,|G_i(z)||\xi|) +K_1(z) \right)  |\xi|^{2}  |\theta|^2  -K_2(z)|\xi| |\tau|^2 - K_3(z) |\xi||\theta||\tau|  \nonumber. 
\end{flalign*}

\noindent Here 
\begin{eqnarray*}
 K_1(z,\xi) &:=& K(z)( |G_i(z)|^2 + |\xi||G_i(z)||d_z{G_i}|^2  + |G_i| |d_z{G_i}|) \\
 &\leq &  K(z)( |G_i(z)|^2 + |d_z{G_i}|^2  + |G_i| |d_z G_i ) := K_1(z) 
 \end{eqnarray*}
 where the last inequality follows since $|G_i(z)||\xi| \leq 1 $ and  $|d_z{G_i}(\theta)| \leq |d_z{G_i}||\theta|$;
$$ K_2(z) := K(z)|G_i(z)|^3 $$ and \[  K_3(z,\xi) :=   K(z)|G_i(z)| ( |G_i(z)| + 2|\xi||G_i(z)| |d_z{G_i}|) \leq K(z)|G_i(z)| ( |G_i(z)| + 2 |d_z{G_i}|):= K_3(z)   \]
where the inequality follows again from the fact that $|G_i(z)||\xi| \leq 1. $ Thus
 $$ \left( \alpha(z)+i \partial \bar{\partial}_{(z,\xi)} \tilde{\Psi}(z,\xi) \right)[\theta,\tau]^2 \geq  $$
\[  -\left(A\lambda(z,|G_i(z)||\xi|) +K_1(z) |\xi|^{2} \right) |\theta|^2  -K_2(z)|\xi| |\tau|^2 - K_3(z) |\xi||\theta||\tau|. \]

\medskip

\noindent We claim that we can  choose the positive integer $m$ so large that $K(z)|G_{i}(z)|  $ and $K(z) |d_z{G_i}|$ go to zero as $z$ approaches the set $\Sigma$. This would imply that $K_1(z),K_2(z),K_3(z)$ will also go to zero as $z$ approaches  $\Sigma.$\\
Now recall that $K(z) =L (Re^R)(z) \leq Le^{2R}(z)$, $R=B|\Phi|$ ($B>1$) and by \eqref{analitsing} $R \leq T_1 \sum_j \log(1/|g_{i,j}|)$. This means that 
$K(z) \leq \frac{T_2}{(g_{i,1}g_{i,2}\ldots g_{i,N})^{T_3}},$ with $T_1,T_2,T_3$ positive constants.  Then choosing $m > T_3 + 1 $ we have the claim.\\
On the other hand the functions  $K(z)G_i(z),$ and $K(z)|d_z{G_i}|$ are bounded also on $V'_i$ for $l+1 \leq i \leq r$ since  $\overline{V'_i} \cap \Sigma = \emptyset$ for $i \geq l+1$. We can then infer that 
$K_1(z) \leq H, K_2(z) \leq H|G_i(z)|^2$ and $ K_3(z) \leq H|G_i(z)|$ on each $V'_i$ for $1 \leq i \leq r$ with $H$ positive constant. 
Combining all the above we get:
\begin{eqnarray}\label{H} 
&& i \partial \bar{\partial}\tilde{\Psi}(z, \xi) [\theta,\tau]^2 \\
&& \geq   - \alpha[\theta]^2   -  \left(A \lambda(z, |G_{i}(z) \xi|) + H |\xi|^2 \right) |\theta|^2  - H|G_i(z)|^2 |\xi| |\tau|^2 - H|G_i| |\xi| |\theta| |\tau| \nonumber  \\
&&\geq   - \alpha[\theta]^2   -  \left(A \lambda(z, |G_{i}(z)\xi|) + \left(H + \frac{1}{2}\right)  |\xi|^2 \right) |\theta|^2  - \left(H |G_i(z)|^2 |\xi| +\frac{1}{2} H^2|G_i(z)|^2 \right) |\tau|^2\nonumber .
\end{eqnarray}
 where the last inequality follows from the fact that 
 $H |G_i| |\xi| |\theta| |\tau| \leq \frac{1}{2} \left( |\xi|^2 |\theta|^2 + H^2 |G_i|^2 |\tau|^2 \right)$.

\medskip

\noindent We also  claim  that there exists a positive constant $k_0 > 1 $ such that for $1 \leq i \leq \ell$, the quantity  $ \ \frac{e^{-k_0 R(z)}}{|G_i(z)|}$ goes to zero when $z$ goes to $\Sigma.$ \\
To prove the claim we observe that on $V'_i$  for $1 \leq i \leq \ell$,  we have: 

\[ -R = B \Phi \leq \frac{B}{2\pi} \left(\min_{1 \leq  j \leq N} c_j \right) \sum_j \log |g_{i,j}| + T_4 := T_5 \sum_j \log|g_{i,j}| + T_4 \]

with $T_4, T_5$ positive constants, since by construction $|g_{i,j}| < 1/2$ for $1 \leq i \leq \ell $ and $1 \leq j \leq N.$ It follows that $$\frac{e^{-k_0R}}{|G_i|} \leq   e^{k_0 T_4} |g_{i,1} g_{i,2} \cdots g_{i,N}|^{k_0 T_5 -m}.$$ It is then sufficient to choose $k_0 > \frac{m}{T_5} +1.$ Hence, we infer $\frac{e^{-k_0 R}}{|G_i|} < T_6$ for all $ 1 \leq i \leq r,$ with $T_6$ positive constant.\\
We fix $\delta$ real number  with $0 < \delta < S$ and we let 
\[ W_{\delta} := \{ \beta \in M(L^*) \setminus \{\mbox{zero section} \} :\; 0<|\beta(s(\pi(\beta))| < e^{-k_0R(\pi(\beta))}  \delta, \;  \pi(\beta) \in \Omega  \}. \]

\noindent Note that since $k_0 >1$, $W_{\delta} \subseteq W.$ Fix  $\beta\in W_\delta $, it follows that  if $(z,\xi)$ are the local coordinates of $\beta$ in $V'_i$ we have:
 $|\xi| <  T_6 \delta$.
 
\noindent We consider the function $\tilde{f} : W_{\delta}  \to \R$ given by 
$\tilde{f}(\beta)  := \tilde{\Psi}(\beta) + \tilde{K}' |\beta(s(\pi(\beta))|^2,$ with $\tilde{K'}$ a (big enough) positive constant. 
 In local coordinates:
\[ \tilde{f}(z,\xi) = \tilde{\Psi}(z,\xi) + \tilde{K}' |G_{i}(z)|^2 |\xi|^2 . \]
Using \eqref{jac}, we compute the complex Hessian of $\tilde{f}$ (w.r.t. $(z, \xi)$):

\begin{equation}  i\partial \bar{\partial}\tilde{f} [\theta,\tau]^2 = i \partial \bar{\partial}\tilde{\Psi} [\theta,\tau]^2 
+ \tilde{K}'  |\xi|^2 |d_zG_{i}(\theta)|^2 + \tilde{K'}|G_{i}(z)|^2|\tau|^2  + 2\tilde{K'} \Re(\overline{G_{i}(z)} d_zG_{i}(\theta) \xi \bar{\tau}).\label{Hessefdelta} \end{equation}
Then from \eqref{H} and \eqref{Hessefdelta}:
\begin{eqnarray*}
 i \partial \bar{\partial}\tilde{f} [\theta,\tau]^2 & \geq &  - \alpha[\theta]^2  -  \left(A \lambda(z, |G_{i}(z) \xi|) + (H + 1/2)  |\xi|^2 \right) |\theta|^2  - (H |G_i(z)|^2 |\xi| +1/2 H^2|G_i(z)|^2 ) |\tau|^2\\
 && + \tilde{K}'  |\xi|^2 |d_zG_{i}(\theta)|^2  + \tilde{K'}|G_{i}(z)|^2|\tau|^2  + 2\tilde{K'}\Re(\overline{G_{i}(z)} d_zG_{i}(\theta) \xi \bar{\tau}).
\end{eqnarray*}
On the other hand  since $|d_zG_i(\theta)|\leq |d_z G_i||\theta| $,
\begin{eqnarray*}
2\tilde{K}' |\Re(d_zG_{i}(\theta)\overline{G_{i}(z)} \xi \bar{\tau}) | &\leq& 2\tilde{K}'|G_{i}(z)||d_zG_i||\theta||\xi||\tau| \\
 &\leq & \tilde{K}'^2  |d_zG_{i}|^2|\xi|^2|\theta|^2 + |G_{i}|^2 |\tau|^2.
\end{eqnarray*}
Since for $\beta \in W_{\delta} $ we have $ |\xi| < T_6 \delta< T_6$, it follows that:  

\begin{eqnarray*}\label{fdelta3}
i \partial \bar{\partial}\tilde{f} [\theta,\tau]^2 & \geq &  - \alpha[\theta]^2  -  \left( A \lambda(z, |G_{i}(z) \xi|)  
 + (H + 1/2 +  (\tilde{K'}^2 + K')|d_zG_{i}|^2 ) |\xi|^2  \right) |\theta|^2  \\
 && + \left( \tilde{K}' |G_{i}(z)|^2 -|G_i(z)|^2 -  (H |G_i(z)|^2 |\xi| +1/2 H^2|G_i(z)|^2 ) \right) |\tau|^2  \\
&  \geq & - \alpha[\theta]^2 -  \left( A \lambda(z, |G_{i}(z) \xi|)  
 + (H + 1/2 + (\tilde{K'}^2 + K') |d_z G_{i}|^2 ) |\xi|^2  \right) |\theta|^2 \\
  && + \left( \tilde{K}' |G_{i}(z)|^2 -|G_i(z)|^2 -  (H T_6  |G_i(z)|^2  +1/2 H^2|G_i(z)|^2 ) \right) |\tau|^2 \\
  &= &  - \alpha[\theta]^2  -  \left( A \lambda(z, |G_{i}(z) \xi|)  
 + (H + 1/2 + (\tilde{K'}^2 + K')|d_z G_{i}|^2 ) |\xi|^2  \right) |\theta|^2 \\
&& + |G_i(z)|^2 \left( \tilde{K}'  - 1  -  H T_6    - 1/2 H^2  \right) |\tau|^2. \end{eqnarray*}
We choose $\tilde{K}'$ such that   $ \left( \tilde{K}'  - 1  -  H T_6    - \frac{1}{2} H^2  \right) > 0$  and such that $\tilde{K'} - K > 0$ (where $K$ is the constant appearing in \eqref{Lelong}). We also choose $K^*$ such that 
  $  K^* >H + \frac{1}{2} +  ( \tilde{K'}^2 + \tilde{K}')|d_zG_{i}|^2  $   for all $1 \leq i \leq r.$ \\
Finally we find:
 \begin{equation} i \partial \bar{\partial}\tilde{f} [\theta,\tau]^2 >  - \alpha[\theta]^2 -  \left( A \lambda(z, |G_{i}(z) \xi|)  
 + K^*  |\xi|^2  \right) |\theta|^2. \label{fdelta4} \end{equation}
Also we recall that  $\omega$ and $|dz|^2$  are K\"ahler forms on $V_i,$ hence $\frac{\omega}{ |dz|^2}$ is bounded on $V_i'.$ So by multiplying $A$ and $K^*$ by the same big positive  constant (but keeping the same name $A$ and $K^*$) we obtain:  
 \begin{equation} i \partial \bar{\partial}\tilde{f} [\theta,\tau]^2 >  - \alpha[\theta]^2  -  \left( A \lambda(z, |G_{i}(z) \xi|)  
 + K^*  |\xi|^2  \right)  \omega[\theta]^2. \label{fdelta5} \end{equation}

\smallskip

\noindent  We now  fix $x_0 \in \Omega$ and we look at the restriction of $\tilde{f}$ to the fiber  over $x_0$ of $\pi: W_{\delta}  \to \Omega$ as a function of $ u := \log|\beta(s(x_0))|.$  
Since $W_{\delta} \subseteq W,$ such restriction  $h_{x_0} $ is well defined on  $ ]- \infty, -k_0R(x_0) + \log\delta[$. Then $h_{x_0}$ is 
strictly increasing with respect to $u,$ since thanks to \eqref{Lelong}, the first derivative of $h_{x_0}$ is $\lambda(x_0,e^u )  + 2 (\tilde{K'} - K ))e^{2u}. $    
Observe that by construction the function $\tilde{f} : W_{\delta} \to \R $ is invariant under the natural $S^1$ action on $M(L^*) \setminus \{ \mbox{zero section} \}. $ \\
Now fix $c >0$.
Since the Lelong number of $P(0)$ at $x_0$ is zero we have that

$$ \lim_{u \to -\infty}  \frac{\partial (h_{x_0}(u) - cu)}{\partial u}  = -c.$$ 
This means that the function $h_{x_0}(u) - cu$ is either decreasing on the whole interval  $]-\infty, - k_0 R(x_0) + \log\delta[,$ and in this case $u_0 = -k_0R(x_0) + \log(\delta)$; or  $h_{x_0}(u) -cu$  has a minimum value $u_0<-k_0R(x_0)  +  \log\delta.$ In both cases $\lambda(x_0,e^{u_0}) \leq  c$. (Note that $u_0$ depends on $x_0$).
\smallskip

\noindent However, since $\tilde{\Psi} \leq 0$ and  $e^{-k_0R(x_0)}\delta < \delta$, by definition of $\tilde{f}$ we have
\begin{eqnarray*}
h_{x_0}(u_0)  - cu_0 &\leq & h_{x_0}( -k_0R(x_0) + \log\delta)  -c( -k_0R(x_0) +\log\delta) \\
&  \leq & \tilde{K'}  e^{-2k_0R(x_0)}\delta^2  -c(  -k_0 R(x_0) + \log \delta)\\ &\leq &  
\tilde{K'} \delta^2 -c(  - k_0R(x_0) + \log \delta )  \\
&\leq & \tilde{K'} \delta^2  + ck_0R(x_0) - c\log\delta. 
\end{eqnarray*}
Then:
\begin{equation} h_{x_0}(u_0) - cu_0  - \tilde{K'} \delta^2 -c(k_0 R(x_0) - \log\delta)\leq 0. \label{minore} \end{equation}  

\noindent Since the function $\beta \to \beta(s(\pi(\beta)),$ is holomorphic and nowhere zero on $W_{\delta},$ by  formula \eqref{fdelta5} and the fact that $|\xi|\leq T_6 \delta$, we see that  in coordinates
\begin{eqnarray*}
 i \partial \bar{\partial} ( \tilde{f} -c\log|\beta(s(\pi(\beta))|) =  i \partial \bar{\partial}  \tilde{f}  & > &  - \alpha - \left( A\lambda(\pi(\beta), \beta(s(\pi(\beta)) + K^* |\xi|^2 \right) \omega\\
 &\geq & - \alpha - \left( A\lambda(\pi(\beta), \beta(s(\pi(\beta)) + K^* T_6^2\delta^2 \right) \omega
 \end{eqnarray*}
Set $K^{**}= K^* T_6^2$ and let $\eta$,$\sigma$ be the local potentials of $\alpha $ and $\omega$ respectively. It then follows that the function  $\tilde{f} + (Ac  + K^{**}\delta^2)\sigma\circ \pi + \eta \circ \pi$ is strictly psh in a neighborhood of  a point $\beta_0 \in W_{\delta}$ such that 
$\pi(\beta_0) = x_0$ and $\log|\beta_0(s(x_0)| = u_0.$  

\smallskip

\noindent Since $\tilde{f}$ is invariant under rotations, we  can apply Kiselman minimum principle on an invariant neighbourhood of a point $\beta $ with $\pi(\beta) = x$   to prove that the function 
$ \tilde{\psi} : \Omega \to \R,$ given by 
$$\tilde{\psi}(x)  := \inf\left\{ \tilde{f}(\beta)  - c \log|\beta(s(x))|,\quad\, \beta \in W_{\delta}\,:\, \pi(\beta) = x \right\}$$ satisfies the inequality 
\begin{equation}\label{eqpsi}
 i \partial \bar{\partial} \tilde{\psi} \geq -C^*_0 \omega - \alpha
\end{equation}
  in a neighborhood of $x_0,$ with
\[ C^*_0 = Ac + K^{**} \delta^2. \] But $x_0$ is arbitrary so this inequality holds on all of $\Omega.$

\noindent Moreover by \eqref{minore} we find

\[   \tilde{\psi}(x)  - \tilde{K'} \delta^2 -c( k_0 R(x) - \log\delta) \leq 0 \]
We can rewrite this inequality as: 
\[ Q(x) :=  \tilde{\psi}(x) + ck_0 B \Phi(x)  + c \log\delta  - \tilde{K'} \delta^2 \leq 0.\]
Thanks to \eqref{eqpsi} and the fact that $i \partial \bar{\partial} \Phi \geq \omega - \alpha $ we obtain
\[ ( 1 + ck_0B) \alpha + i \partial \bar{\partial} Q \geq -  C^*_0 \omega  + ck_0B \omega \geq -C_0^* \omega. \]
Now we choose $\tilde{\mu} := \frac{C^*_0}{ C_0^* + 1 + c k_0B},$
and we note that
$0 < \tilde{\mu} < 1. $ Then the function $ v := (1 - \tilde{\mu}) Q+ \tilde{\mu} (1 + ck_0B) \Phi $ is $(1 + ck_0B) \alpha$ psh and $ v \leq 0.$

Then $v$ extends to a non-positive  $(1 + ck_0B) \alpha$ psh function on $X,$ so that

 \begin{equation} v \leq (1 + ck_0B)P(0) \leq P(0)  \label{bartheta} \end{equation} 
 since $P(0) \leq 0.$ In other words, since $Q \leq 0,$ 
 \[ P(0) \geq (1 - \tilde{\mu})Q + \tilde{\mu}(1 + ck_0B) \Phi \geq Q + \tilde{\mu}(1 +ck_0B) \Phi \] so 
 \begin{equation}  P(0) \geq \tilde{\psi}    + ck_0B\Phi +  \tilde{\mu}(1 + ck_0B)\Phi    + c\log \delta -\tilde{K'} \delta^2. \label{basicineq} \end{equation}  
 Now the function $\tilde{f}(\beta)$ with $\pi(\beta) = x,$ depends only on $\|\beta\|_{h^*}$ (where $h^*$ is the dual metric on $L^*$ introduced above). As function of $\|\beta\|_{h^*}$, $\tilde{f}$ is decreasing and it converges to $P(0)(x)$ 
when $\beta$ goes to zero along $\pi^{-1}(x).$
Then $P(0)(x) \leq \tilde{f}(\beta_0) = \tilde{\psi}(x) + c u_0,$ by definition of $u_0$ and $\tilde{\psi}$. 
Combining this with \eqref{basicineq} 
\[ P(0) - cu_0 \leq  \tilde{\psi} \leq P(0)    - ck_0B\Phi -  \tilde{\mu}(1 + ck_0B)\Phi   - c\log\delta  + \tilde{K'} \delta^2. \] 
Now $\Phi \leq -1$ and $P(0)$ is finite in $\Omega,$  then

\[ u_0 \geq - \left(k_0B|\Phi|  + \frac{\bar{\mu}}{c} (1 + ck_0B)|\Phi|   - \log\delta + \tilde{K'} \frac{\delta^2}{c} \right).\]
If we set $c = \delta^2$ we derive  $\frac{\bar{\mu}}{c}  \leq A  + K^{**}.$ 
We also set  $t_0 = e^{u_0},$  then
 \begin{eqnarray*}
 t_0 &\geq & \exp\left(- \left(  (A +  K^{**})(1 + ck_0B)|\Phi| + k_0B|\Phi|  + \tilde{K'}  \right) \right) \delta \\
 &\geq & 
  \exp\left(- \left(  (A +  K^{**})(1 + k_0B)|\Phi| + k_0B|\Phi|  + \tilde{K'}  \right) \right) \delta  
\end{eqnarray*}
since $c = \delta^2 < 1.$ 

\noindent On the other hand $t_0 \leq  \exp{(-k_0R(x))} \delta  \leq \delta,$ hence if we  set 
\[ F(x) := \left(  (A +  K^{**})(1 + k_0B)|\Phi(x)| + k_0B|\Phi(x)|  + \tilde{K'}  \right) \] 
we have $F > 0$ and   $e^{-F(x)} \delta \leq t_0 \leq \delta.$ \\
Let $V$ be a relatively compact open subset of $\Omega,$ 
and assume
 $ 0 < F  \leq M$ on $V,$ then 
 \begin{equation} e^{-M} \delta \leq t_0 \leq \delta \label{log} \end{equation} 
 on $V.$

However  by  \eqref{basicineq}, for $x \in V$  we  have:
\begin{eqnarray*}
 0  &\geq & \tilde{\psi}(x) - P(0)(x)    + ck_0B\Phi(x) +  \tilde{\mu}(1 + ck_0B)\Phi(x) + c\log\delta -\tilde{K'} \delta^2 \\ 
& = &\tilde{f}(\beta_0)-c\log t_0  - P(0)(x)    + ck_0B\Phi(x) +  \tilde{\mu}(1 + ck_0T_1)\Phi(x) + c\log\delta -\tilde{K'} \delta^2 \\
&= & \Psi(x,t_0) + \tilde{K'} t_0^2 -P(0)(x) - c\log t_0 + ck_0B\Phi(x) +  \tilde{\mu}(1 + ck_0B)\Phi(x)   + c\log\delta -\tilde{K'} \delta^2  
\end{eqnarray*}
that is 
\[ \Psi(x,t_0) + \tilde{K'} t_0^2 -P(0)(x) \leq c\log t_0 +ck_0B|\Phi|(x) + \tilde{\mu}(1+ck_0B)|\Phi|(x)  - c\log\delta + \tilde{K'} \delta^2. \]
Then, since $c = \delta^2 < 1$ and $\frac{\bar{\mu}}{c} \leq A + K^{**}$ we obtain 
\begin{eqnarray*} \frac{\Psi(x,t_0) + \tilde{K'} t_0^2 -P(0)(x)}{\delta^2}  &\leq  &\log \frac{t_0}{\delta} + k_0B|\Phi|(x) +  (A + K^{**})(1+ k_0B)|\Phi|(x)  + \tilde{K'}\\
&\leq &  k_0B|\Phi|(x) +  (A + K^{**})(1+ k_0B)|\Phi|(x)   + \tilde{K'}\\
&=& F(x).  
 \end{eqnarray*}
Hence
\begin{equation} \frac{\Psi(x,t_0) + \tilde{K'} t_0^2 -P(0)(x)}{\delta^2}  \leq  M \label{final1} \end{equation}
on the relatively compact open subset  $V$ on $\Omega$ chosen  above. 

\noindent Now recall that the function $ t \to \Psi(x,t) + \tilde{K'} t^2$ is increasing and it is greater or equal to $P(0),$ 
 then if we let $\delta' = e^{-M} \delta \leq t_0 $ we get
\[ 0 \leq  \frac{ \Psi(x,\delta') + \tilde{K'}\delta'^2  - P(0)(x)}{\delta^2} \leq M \]
 or equivalently, 
\[ 0 \leq  \frac{ \Psi(x,\delta') + \tilde{K'} \delta'^2  - P(0)(x)}{\delta'^2} \leq Me^{2M} \]
for $x \in V$ and $0 < \delta' =e^{-M} \delta< Se^{-M}$ (recall that $\delta<S<1/2$).\\
We then infer that for $x \in V $ and for $0 < \delta' < Se^{-M}$ 
\[ - \tilde{K'} \leq  \frac{ \Psi(x,\delta')   - P(0)(x)}{\delta'^2} \leq Me^{2M} - \tilde{K'} \]
Since the relatively compact set $V$ in $\Omega$ is arbitrary,
combining the above inequality together with Lemma \ref{ineqreg} we show that $P(0)$ in $C^{1,\bar{1}}$ in $\Omega.$
\smallskip
\begin{remark}
It is worth observing that the estimate \eqref{eqpsi} is essentially the one given in \cite[Lemma 1.12]{BD}.
\end{remark}
\bigskip

Given $k$ (possibly different) $C^{1,\bar{1}}$ functions $f_1, \ldots,f_k$ on $X,$ we can consider the rooftop envelope $P(\min_{1 \leq j  \leq k} f_j),$ also denoted by $P(f_1,\ldots,f_k)$.
Note that $P(f_1,\ldots,f_k)$ is a genuine $\alpha$-psh function with minimal singularities. In fact, the functions $f_j$ for $1 \leq j \leq k$ are continuous on $X,$ then there is a positive constant $C$ such that $-C \leq \min_{1 \leq j \leq k} f_j \leq C,$
therefore 
$ P(0) - C \leq P(f_1, \ldots,f_k)  \leq P(0) +  C.$

\smallskip

We are now able to derive our main theorem:

 \begin{theorem}\label{main}
Let $X$ be a compact complex manifold. Assume $\alpha$ is a smooth real closed $(1,1)$-form such that $ \{ \alpha \}$ is a big class. Let $f_1, \ldots, f_k$ be  $C^{1,\bar{1}}$ functions on $X$. Then the rooftop envelope $P(f_1,\ldots,f_k)$ is $C^{1,\bar{1}}$ in $\Omega$, the ample locus of $\{ \alpha \}.$ In particular the coefficients of 
$i \partial \bar{\partial}P(f_1,\ldots,f_k)$ are locally bounded on any local holomorphic coordinate system in $\Omega.$ 
\end{theorem}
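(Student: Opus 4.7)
The plan is to rerun the argument already developed for $P(0)$ with the barrier $0$ replaced by $g := \min_{1 \leq j \leq k} f_j$, and to exploit the $C^{1,\bar{1}}$-regularity of each individual $f_j$ (not of $g$ itself, which is only Lipschitz) to close the upper estimate. Set $u := P_\alpha(f_1,\ldots,f_k) = P_\alpha(g)$. Since the $f_j$'s are bounded, $u$ differs from $P(0)$ by a bounded function, so $u$ is $\alpha$-psh and bounded on $\Omega$, with vanishing Lelong numbers at every point of $\Omega$. I would then form the exponential regularization $\tilde{\Psi} := \Psi(u)$ exactly as in Section \ref{s1}.

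The Hessian lower bounds derived in Lemmas \ref{firstineq}, \ref{beta}, \ref{lemma_varcoeff} and in the line-bundle estimate \eqref{fdelta5} depend only on the Lelong--Jensen identity for the exponential regularization of a quasi-psh function with vanishing Lelong numbers on $\Omega$; their proofs transfer verbatim to $\tilde{\Psi}$ with the same constants $A, K(x), \delta(x)$. The matching upper bound is where the $C^{1,\bar{1}}$ hypothesis on the $f_j$'s enters: from $u \leq f_j$ one gets $\tilde{\Psi}(z,w) \leq \Psi(f_j)(z,w)$, and a Taylor expansion of $\exph_z$ combined with the $L^\infty$ bound on $i\partial\bar{\partial} f_j$ gives $\Psi(f_j)(z,w) - f_j(z) \leq C|w|^2$ uniformly on $X$. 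Taking the minimum over $j$ yields $\tilde{\Psi}(z,w) \leq g(z) + C|w|^2$; on the contact set $\{u = g\} \cap \Omega$ this becomes the desired bound $\tilde{\Psi}(z,w) - u(z) \leq C|w|^2$.

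With these two estimates in hand, one reruns the line-bundle construction of Section \ref{section_line} with $u$ in place of $P(0)$ and barrier $g$ in place of $0$. The Kiselman infimum $\tilde{\psi}$ then satisfies $\tilde{\psi}(x) \leq g(x) + \tilde{K}' \delta^2$ rather than $\tilde{\psi}(x) \leq 0$. To produce a non-positive $\alpha$-psh comparison function pointing toward $u$, I would subtract $f_{j_0(x)}$ at each $x$, where $j_0(x)$ is a contact index realizing $g(x) = f_{j_0(x)}(x)$, and observe that the $L^\infty$ bound on $i\partial\bar{\partial} f_j$ absorbs into bounded perturbations of the constants $A, K, K^*, \tilde{K}'$ throughout. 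The outcome is the analogue of \eqref{final1},
\[
-C_2 \leq \frac{\tilde{\Psi}(x,\delta') - u(x)}{{\delta'}^2} \leq C_1,
\]
uniform on every relatively compact subset of $\Omega$ for $\delta'$ small; Lemma \ref{ineqreg} then concludes $u \in C^{1,\bar{1}}(\Omega)$.

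The principal obstacle is precisely the gap between the Lipschitz barrier $g$ and the $C^{1,\bar{1}}$-regularity of its constituents: a direct comparison $\tilde{\Psi} \leq \Psi(g)$ would only yield an $O(|w|)$ upper bound, one order too weak. The fix is to bound $\tilde{\Psi}$ against each $\Psi(f_j)$ separately using the $O(|w|^2)$ Taylor estimate from $C^{1,\bar{1}}$-regularity, and to take the minimum over $j$ only at the end. A secondary subtlety is that the comparison $\alpha$-psh function $v$ built from $f_{j_0(x)}$ depends pointwise on the choice of $j_0(x)$; but the bounds obtained are uniform in $j_0$, so one can produce a globally valid comparison by running the argument for each of the finitely many indices $j_0$ (on the region where $f_{j_0}$ realizes the minimum) and taking the worst resulting constant.
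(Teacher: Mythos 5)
Your route is genuinely different from the paper's. The paper does \emph{not} rerun the regularization machinery with a new barrier; it reduces Theorem~\ref{main} to the already-established case $P(0)$ by a two-step trick. First, for a single barrier $f$ that is quasi-psh and $C^{1,\bar 1}$, it absorbs $f$ into the form, observing that $P_\alpha(f)-f = P_{\alpha+i\partial\bar\partial f}(0)$ and that the entire machinery of Sections 2--4 only needs $\alpha$ to have bounded (not smooth) coefficients on the sets $V_i'$. Second, for the rooftop $g=\min_j f_j$, which is not $C^{1,\bar 1}$, it first forms the K\"ahler rooftop $P_{A\omega}(f_1,\dots,f_k)$ (with $A\omega>\alpha$), invokes the known K\"ahler regularity result to conclude that this function is $C^{1,\bar 1}$ and quasi-psh, and then shows $P_\alpha(f_1,\dots,f_k)=P_\alpha(P_{A\omega}(f_1,\dots,f_k))$, which falls under the first step. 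Your proposal, by contrast, tries to prove the estimate $II_1$ for $\Psi(P(g))$ directly.

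The gap in your proposal is in the comparison-function step, and it is not a ``secondary subtlety'' --- it is precisely the obstruction the paper's reduction is designed to remove. The argument in Lemma~\ref{beta} and Section~\ref{section_line} produces the upper bound on $(\Psi-P(0))/\delta^2$ \emph{only} through the chain: Kiselman infimum $\tilde\psi$, then $\tilde\psi\le 0$ (crucially), then extension of $v=(1-\tilde\mu)\cdot(\mathrm{affine\ correction\ of\ }\tilde\psi)+\tilde\mu\Phi$ to a global non-positive $\alpha$-psh function, then $v\le P(0)$. With barrier $g$ you get $\tilde\psi(x)\le g(x)+\tilde K'\delta^2$ instead of $\tilde\psi\le 0$, and you need a single globally defined $\alpha$-psh function bounded above by $g=\min_j f_j$ to compare with $u=P_\alpha(g)$. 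Subtracting a fixed $f_{j_0}$ gives something $\alpha+i\partial\bar\partial f_{j_0}$-psh and $\le 0$, hence $\le f_{j_0}$ after adding $f_{j_0}$ back, but \emph{not} $\le f_i$ for $i\ne j_0$; taking the pointwise minimum over $j_0$ restores the bound $\le g$ but destroys $\alpha$-pshness; and ``running the argument on the region where $f_{j_0}$ realizes the minimum'' does not help because those regions are closed, overlap along thin sets, and the Kiselman minimum principle as well as the extension across $\Sigma$ require a globally defined quasi-psh function, not a family of local ones glued by the worst constant. Your Taylor estimate $\tilde\Psi(z,w)\le g(z)+C|w|^2$ is correct and is exactly what gives $\tilde\psi\le g+O(\delta^2)$, but you then observe that this yields $\tilde\Psi-u\le C|w|^2$ ``on the contact set'' --- which is not the required estimate: the two-sided bound feeding into Lemma~\ref{ineqreg} must hold on every relatively compact subset of $\Omega$, not only where $u=g$. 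Off the contact set, the upper bound can only come from the comparison function, and that is where the construction breaks. The reason the paper pre-regularizes via $P_{A\omega}(f_1,\dots,f_k)$ is exactly to replace the merely Lipschitz $g$ by a single $C^{1,\bar 1}$ quasi-psh function below $g$, whose complex Hessian is bounded and can be absorbed into $\alpha$; without some such input (the K\"ahler result, or a fresh idea) the direct rerun does not close.

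A minor secondary point: your claim that Lemmas~\ref{firstineq}, \ref{beta}, \ref{lemma_varcoeff} ``transfer verbatim'' overstates the case. Lemma~\ref{firstineq} and the Demailly estimates underlying it indeed apply to $\Psi(u)$ for any $\alpha$-psh $u$; but Lemma~\ref{beta} already uses $\psi_{c,\delta}\le 0$, i.e.\ the comparison step above, so it does not transfer unchanged, and Lemma~\ref{lemma_varcoeff} depends on $II_{1/2}$ from Lemma~\ref{beta}.
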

\begin{proof}
We first assume that $(X,\omega)$ is K\"ahler,  that $k=1$ and that the function $f_1$ is quasi-psh.
As in \cite{BD}, we
observe that in the above computations we do not really need $\alpha$ to be a smooth form. In our arguments we only need that the coefficients of $\alpha$ are bounded on the open sets $V'_i, 1 \leq i \leq r$ given above. Observe that since $f_1$ is quasi-psh and it is
$ C^{1,\bar{1}}(X)$, then coefficients of $i \partial \bar{\partial} f_1$ are locally bounded. We can then replace the smooth form $\alpha$ by $\alpha + i \partial \bar{\partial}f_1$.\\
If $P(f_1)$  is the psh envelope of $f_1$ with respect to the form $\alpha,$ then $P(f_1) - f_1$ coincides with the envelope $P(0)$ with respect to
 $\alpha + i \partial \bar{\partial} f_1.$ 
 Moreover if $ q : \tilde{X} \to X$ is the modification as above, the function $f_1 \circ q$ is still $C^{1,\bar{1}}$ and quasi-psh on $\tilde{X}.$ Thus, we infer that $P(f_1)$ is $C^{1,\bar{1}}$ in $\Omega$. \\
We now take functions $f_1, \ldots f_k$ which are not necessarily quasi-psh and we choose $A>0$ big enough such that 
$A \omega > \alpha$. We then consider the rooftop envelope of $f_1, \ldots, f_k$ w.r.t. $A\omega$, $P_{A\omega}(f_1, \ldots,f_k)$. Since the functions $f_1, \ldots,f_k$ are in $C^{1,\bar{1}} (X)$ and $A\omega$ is a K\"ahler form it follows from \cite{Ber19} and \cite[Theorem 2.5]{DR_kis} that $P_{A\omega}(f_1, \ldots,f_k)$ is also $C^{1,\bar{1}}$ on $X$. 
 
Moreover $P_{A\omega}(f_1, \ldots,f_k)$  is also quasi-psh by definition, hence the arguments  above ensure that $P_\alpha(P_{A\omega}(f_1, \ldots,f_k))$ is $C^{1,\bar{1}}$ in $\Omega$.\\
We claim that $P_\alpha(f_1,\ldots,f_k)=P_\alpha(P_{A\omega}(f_1,\ldots,f_k))$. \\
Indeed, $P_\alpha( P_{A\omega}(f_1,\ldots,f_k))\leq P_\alpha( f_1,\ldots,f_k)$ since $P_{A\omega}(f_1,\ldots,f_k)\leq \min_{1 \leq j \leq k} f_j$. For the reverse inequality we observe that any $\alpha$-psh function is also $A \omega$-psh ($A\omega > \alpha$). This implies that $P_\alpha(f_1,\ldots,f_k)\leq P_{A\omega}(f_1,\ldots,f_k)$. Taking the envelope w.r.t $\alpha$ both sides we get the equality we wanted. 

Finally to remove the K\"ahler assumption: given a compact complex manifold $X$, we observe (as we did at the beginning of Section \ref{section_line}) that we can choose a K\"ahler current representing $\{\alpha\}$ which is smooth on $\Omega$. If we resolve its singularities and we make further blow-ups as in \cite[Theorem 3.4 and Lemma 3.5] {DP} we obtain a compact K\"ahler manifold $\tilde{X}$ with a birational morphism from  $\tilde{X}$ to $X$ which is biholomorphic over $\Omega.$ Then (as in Section \ref{section_line}) we can pull-back the  form $\alpha$ and the functions $P(0),f_1, \ldots,f_k$ in order to reduce to the K\"ahler case.    
\end{proof}

 \bibliographystyle{plain}
\bibliography{biblio.bib}
 \bigskip

 \noindent{\sc IMJ-PRG, Sorbonne Universit\'e, \\
 4 place Jussieu, 75005, Paris, France,\\
 \tt {eleonora.dinezza@imj-prg.fr}}
\bigskip
  
  \noindent{\sc Universit\`a di Roma TorVergata, \\
  Via della Ricerca Scientifica 1, 00133, Roma, Italy, \\
  \tt{trapani@mat.uniroma2.it}}

 \end{document}